\DeclareSymbolFont{cyrletters}{OT2}{wncyr}{m}{n}
\DeclareMathSymbol{\Sha}{\mathalpha}{cyrletters}{"58}
\title[Renormalized values of shuffle type and of harmonic type]
{Relationship between renormalized values of shuffle type and of harmonic type of multiple zeta functions\\
%----{\tt THIS IS A DRAFT}----
}
\author{Nao Komiyama}
\address{Graduate School of Mathematics, Nagoya University, 
Furo-cho, Chikusa-ku, Nagoya 464-8602 Japan }
\email{m15027u@math.nagoya-u.ac.jp}
\thanks{}
\date{March 31, 2021}
\newtheorem{thm}{Theorem}[section]
\newtheorem{lem}[thm]{Lemma}
\newtheorem{cor}[thm]{Corollary}
\newtheorem{prop}[thm]{Proposition}  
\theoremstyle{remark}
\subjclass[2010]{}
\keywords{}
\numberwithin{equation}{section}
\theoremstyle{definition}
\newtheorem{definition}[thm]{Definition}
\newtheorem{remark}[thm]{Remark}
\newtheorem{example}[thm]{Examples}
\newtheorem{prob}[thm]{Problem}
\newcommand{\N}{{\mathbb N}}
\newcommand{\Z}{{\mathbb Z}}
\newcommand{\Q}{{\mathbb Q}}
\newcommand{\C}{{\mathbb C}}
\newcommand{\QSh}[3]{{\rm QSh}\binom{#1;#2}{#3}}
\newcommand{\emp}{{\bf 1}}
\newcommand{\EMS}{{\rm EMS}}
\newcommand{\MP}{{\rm MP}}
\newcommand{\GZ}{{\rm GZ}}
\begin{document}
\bibliographystyle{amsalpha+}
\maketitle

%%%%%%%%%%%%%%%%%%%%%%%%%%%%%%%%%%%%%%%%%%%%%%%%%%%%%%%%%%%%%%%%%%%%%%%
\begin{abstract}
In this paper, we settle the problem posed by Singer which is on a comparison problem between the renormalized values of shuffle type and harmonic type of multiple zeta functions.
\end{abstract}

%%%%%%%%%%%%%%%%%%%%%%%%%%%%%%%%%%%%%%%%%%%%%%%%%%%%%%%%%%%%%%%%%%%%%%%
\tableofcontents
%%%%%%%%%%%%%%%%%%%%%%%%%%%%%%%%Section0%%%%%%%%%%%%%%%%%%%%%%%%%%%%%%%%%%%%%%
\setcounter{section}{-1}
\section{Introduction}
The {\it multiple zeta function} (MZF for short) is defined by
\begin{equation*}
	\zeta(s_1,\dots,s_r):=\sum_{0<m_1<\cdots<m_r}\frac{1}{m_1^{k_1}\cdots m_r^{k_r}}
\end{equation*}
and converges absolutely in the region
$$
\left\{ (s_1,\dots,s_r)\in\C^r\ |\ \Re(s_{r-k+1}+\cdots+s_r)>k ,\ 1\leq k \leq r \right\}.
$$
Especially, $\zeta(s_1,\dots,s_r)$ is called the {\it multiple zeta value} (MZV for short) for $s_1,\dots,s_{r-1}\in\Z_{\geq1}$ and $s_r\in\Z_{\geq2}$.
In \cite{AET}, it is shown that MZF can be meromorphically continued to $\C^r$, and all singularities of MZF are explicitly determined as
\begin{align}\label{all pole of MZF}
	&s_r=1, \nonumber\\
	&s_{r-1}+s_r=2,1,0,-2,-4,\dots, \\
	&s_{r-k+1}+\cdots+s_r=k-n\quad (3\leq k\leq r,\ n\in\mathbb{N}_0). \nonumber
\end{align}
Because almost all integer points are located in the above singularities, the special values of MZFs there are not determined.
As one of the way which gives a nice definition of the special values of MZFs at integer points, the following problem is proposed.
\begin{prob}[Problem \ref{prob:reno prob}, Renormalization problem of MZVs]
Extend MZVs to all integer points such that
\begin{enumerate}
\renewcommand{\labelenumi}{(\Alph{enumi}).}
	\item the values coincide with the special values of analytic continuation of MZFs,
	\item the harmonic relations or the shuffle relations are preserved.
\end{enumerate}
\end{prob}
In connection with the above problem, Guo and Zhang (\cite{GZ}), Manchon and Paycha (\cite{MP}) and Ebrahimi-Fard, Manchon and Singer (\cite{EMS2}) independently give the renormalized values of harmonic type of MZFs at negative integers.
Moreover, in \cite{EMSZ}, it is shown that there are infinitely many solutions of the above problem for the harmonic relations (see Theorem \ref{thm: EMSZ}).
While, the renormalized values of shuffle type of MZFs at non-positive integers are given only by Ebrahimi-Fard, Manchon and Singer (\cite{EMS1}).
In the final line of \cite{Singer}, Singer mention the following problem on a relationship between the renormalized values of harmonic type and shuffle type.

\begin{prob}[Problem \ref{prob: Singer problem}]
Which renormalized value of harmonic type has an explicit relationship with the renormalized values $\zeta_{\EMS}(-k_1,\dots,-k_r)$ (defined in Definition \ref{def:renormalized values})?
\end{prob}
Our main result in this paper is the following theorem which settle the above problem.
\begin{thm}[Theorem \ref{thm:EMS=GZ,MP}]
For $r\geq1$, we have
\begin{equation}\label{eqn:EMS=GZ,MP in intro}
Z_{\EMS}(t_1,\dots,t_r)
	=\sum_{i=1}^r\sum_{\sigma\in\mathcal P(r,i)}
	Z_*\left(u_{\sigma^{-1}(1)},\dots,u_{\sigma^{-1}(i)}\right).
\end{equation}
Here, $Z_{\EMS}(t_1,\dots,t_r)$ is defined by \eqref{eqn:gen fun of zetaEMS} and $Z_*(t_1,\dots,t_r)$ is defined in Definition \ref{eqn:gen fun of ren val of har type}, and for $r,i\in\N$ with $i\leq r$, the set $\mathcal P(r,i)$ (see \S \ref{sub:Relationship between renormalized values} for detail) is defined by
$$
\mathcal P(r,i):=\left\{\sigma:\{1,\dots,r\}\twoheadrightarrow\{1,\dots,i\}\right\},
$$
and, for $\sigma\in\mathcal P(r,i)$, the symbol $u_{\sigma^{-1}(k)}$ is defined by
$$
u_{\sigma^{-1}(k)}:=\sum_{n\in\sigma^{-1}(k)}u_n
$$
for $u_i:=t_i+\cdots+t_r$ ($1\leq i\leq r$).
\end{thm}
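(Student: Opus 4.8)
The plan is to compare the two generating functions term by term by unwinding the definitions of $Z_{\EMS}$ and $Z_*$ in terms of the underlying renormalized values, and then to recognize the combinatorial sum over $\mathcal P(r,i)$ as exactly the surjection‑counting that governs the passage from the shuffle‑type regularization to the harmonic‑type (``stuffle/quasi-shuffle'') regularization. Concretely, I would first write out both sides of \eqref{eqn:EMS=GZ,MP in intro} as formal power series in the variables $t_1,\dots,t_r$ (equivalently in the $u_i=t_i+\cdots+t_r$): the left-hand side $Z_{\EMS}(t_1,\dots,t_r)$, defined by \eqref{eqn:gen fun of zetaEMS}, packages the values $\zeta_{\EMS}(-k_1,\dots,-k_r)$, while the right-hand side packages the values $\zeta_*(-k_1,\dots,-k_r)$ through $Z_*$ (Definition \ref{eqn:gen fun of ren val of har type}). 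The key structural input is that the EMS shuffle renormalization and the GZ/MP harmonic renormalization arise from the \emph{same} Hopf-algebraic/Rota--Baxter machinery applied to two coproducts that differ precisely by the quasi-shuffle correction terms; this is what forces a surjection sum to appear.

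The main steps, in order, would be: (1) express $Z_{\EMS}$ as the image under the relevant algebra morphism of a ``pure'' generating series attached to the shuffle coproduct, and likewise $Z_*$ for the quasi-shuffle coproduct; (2) recall (or re-derive from the earlier sections) the standard identity relating the shuffle and quasi-shuffle Hopf structures, namely that the exponential/logarithm intertwiner between them is governed by compositions, so that a shuffle-coproduct component in $r$ slots expands as a sum over surjections $\sigma:\{1,\dots,r\}\twoheadrightarrow\{1,\dots,i\}$ of quasi-shuffle components in $i$ slots, with the $k$-th argument collapsed to $u_{\sigma^{-1}(k)}=\sum_{n\in\sigma^{-1}(k)}u_n$; (3) substitute this expansion into the expression for $Z_{\EMS}$, check that the renormalization morphism (Birkhoff decomposition / Rota--Baxter projection) commutes with this purely combinatorial collapse because it only depends on the coproduct through the characters, and read off exactly the right-hand side of \eqref{eqn:EMS=GZ,MP in intro}; (4) verify convergence/well-definedness of the formal series manipulations and the boundary case $r=1$, where the sum collapses to $Z_{\EMS}(t_1)=Z_*(t_1)$.

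I expect the main obstacle to be step (3): showing that the renormalization procedure genuinely respects the combinatorial change of coproduct. The two renormalized values $\zeta_{\EMS}$ and $\zeta_*$ are each extracted by a Birkhoff-type decomposition of a character into a ``finite part'' and a ``polar part'', and one must argue that the finite part transforms under the shuffle$\leftrightarrow$quasi-shuffle intertwiner in the same way the character does — i.e. that the intertwiner, being an algebra (and coalgebra-compatible) map, descends to the quotient on which renormalized values live, and in particular does not introduce new poles. A clean way to handle this is to perform the comparison at the level of the \emph{regularized} (not yet renormalized) generating functions, where the shuffle-to-quasi-shuffle relation is a formal identity of power series with coefficients in a Laurent-type ring, and only then apply the finite-part projection to both sides simultaneously; since the projection is $\Q$-linear and multiplicative in the appropriate sense, it passes through the surjection sum unchanged. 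The remaining checks — matching normalizations in \eqref{eqn:gen fun of zetaEMS} and Definition \ref{eqn:gen fun of ren val of har type}, and tracking the $u_i$ versus $t_i$ bookkeeping — are routine once the conceptual identity is in place.
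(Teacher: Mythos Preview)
Your plan has a genuine gap, and it stems from a misidentification of what drives the surjection sum and of what can be assumed about $\mathfrak z$.

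First, the theorem is stated for an \emph{arbitrary} $\mathfrak z\in X_{\leq 0}$. By Theorem~\ref{thm: EMSZ} this set is infinite, and a generic $\mathfrak z$ is not obtained from any Birkhoff decomposition of a regularized character; the only data you are given are that $\mathfrak z$ is a $*$-algebra homomorphism and that $\mathfrak z(z_{-k})=\zeta(-k)$ (Remark~\ref{cond: harmonic and Riemann zeta}). So your step~(3), which proposes to compare the EMS Birkhoff decomposition with ``the'' harmonic Birkhoff decomposition and argue that the finite-part projections match through an intertwiner, cannot even be formulated for general $\mathfrak z$. Second, the surjection sum here is \emph{not} Hoffman's exp/log intertwiner between shuffle and quasi-shuffle (that map is indexed by compositions into consecutive blocks with nontrivial rational coefficients); rather, it is exactly the expansion of the quasi-shuffle product $[u_1]*\cdots*[u_r]$ of depth-one letters (Proposition~\ref{prop:r-product of gen fun of GZ}). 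In particular there is no Hopf-algebra morphism $\mathcal H_0\to\mathcal H_{\leq 0}$ lurking in the background that would make your step~(2) go through.

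The paper's proof is much more direct and uses neither a comparison of Birkhoff decompositions nor any shuffle/quasi-shuffle intertwiner. The single structural input on the EMS side is the explicit factorization
\[
Z_{\EMS}(t_1,\dots,t_r)=\prod_{i=1}^r Z_{\EMS}(u_i),\qquad u_i=t_i+\cdots+t_r,
\]
already established as \eqref{eqn:recurrence formula of gen fun} from the closed formula for $Z_{\EMS}$. Since $\mathfrak z$ agrees with $\zeta$ in depth one, $Z_{\EMS}(u_i)=Z_*(u_i)$, so the left-hand side equals $\prod_i Z_*(u_i)$. Finally, because $\mathfrak z$ is a $*$-homomorphism, Proposition~\ref{prop:r-product of gen fun of GZ} rewrites this product as the surjection sum on the right-hand side. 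If you want to salvage your outline, replace steps~(2)--(3) by these two ingredients: the product formula \eqref{eqn:recurrence formula of gen fun} for $Z_{\EMS}$, and the quasi-shuffle expansion of $\prod_i Z_*(u_i)$.
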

We denote the renormalized values introduced in \cite{GZ} and \cite{MP} by 
\begin{align*}
	\zeta_{\GZ}(-k_1,\dots,-k_r) \mbox{\quad and \quad} \zeta_{\MP}(-k_1,\dots,-k_r),
\end{align*}
for $k_1,\dots,k_r\in \Z_{\leq0}$, and define their generating functions by
\begin{align*}
	&Z_{\GZ}(t_1,\dots,t_r):=\sum_{k_1,\dots,k_r=0}^{\infty}\frac{(-t_1)^{k_1}\cdots(-t_r)^{k_r}}{k_1!\cdots k_r!}\zeta_{\GZ}(-k_1,\dots,-k_r), \\
	&Z_{\MP}(t_1,\dots,t_r):=\sum_{k_1,\dots,k_r=0}^{\infty}\frac{(-t_1)^{k_1}\cdots(-t_r)^{k_r}}{k_1!\cdots k_r!}\zeta_{\MP}(-k_1,\dots,-k_r).
\end{align*}
\begin{cor}[Theorem \ref{cor:EMS=GZ,MP}]
The equation \eqref{eqn:EMS=GZ,MP in intro} holds for $Z_*=Z_{\GZ}$ and $Z_{\MP}$.
Hence, the renormalized values $\zeta_{\EMS}(-k_1,\dots,-k_r)$ can be represented by a finite linear combination of either $\zeta_{\GZ}(-k_1,\dots,-k_r)$ or $\zeta_{\GZ}(-k_1,\dots,-k_r)$.
\end{cor}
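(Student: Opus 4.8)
The plan is to obtain the statement as a direct consequence of Theorem~\ref{thm:EMS=GZ,MP}. That theorem establishes \eqref{eqn:EMS=GZ,MP in intro} for \emph{every} generating function $Z_*$ of the form introduced in Definition~\ref{eqn:gen fun of ren val of har type}, that is, for every admissible system of renormalized values of harmonic type; so the whole task is to check that both $\{\zeta_{\GZ}(-k_1,\dots,-k_r)\}$ and $\{\zeta_{\MP}(-k_1,\dots,-k_r)\}$ are such systems. First I would recall, from \cite{GZ} and \cite{MP} and as also recorded in Theorem~\ref{thm: EMSZ}, that each of these two families (i) agrees with the meromorphic continuation of $\zeta(s_1,\dots,s_r)$ at the integer points where that continuation is regular, i.e. satisfies condition (A) of the Renormalization problem, and (ii) satisfies the harmonic relations. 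Since condition (A) already forces the depth-one members to be the values $\zeta(-k)$, what then remains is only to confirm that the normalization convention built into Definition~\ref{eqn:gen fun of ren val of har type} matches the ones used in \cite{GZ} and in \cite{MP}; this is a routine unwinding of definitions. Granting it, $Z_{\GZ}$ and $Z_{\MP}$ are each a legitimate choice of $Z_*$, so \eqref{eqn:EMS=GZ,MP in intro} holds verbatim for each of them.

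For the second assertion I would argue coefficientwise. The right-hand side of \eqref{eqn:EMS=GZ,MP in intro} is a \emph{finite} sum: $i$ ranges over $1,\dots,r$ and, for each $i$, $\mathcal P(r,i)$ is the finite set of surjections $\{1,\dots,r\}\twoheadrightarrow\{1,\dots,i\}$. In the $\sigma$-th summand the arguments of $Z_*$ are the linear forms $u_{\sigma^{-1}(k)}=\sum_{n\in\sigma^{-1}(k)}(t_n+\cdots+t_r)$, homogeneous of degree one in $t_1,\dots,t_r$; hence the coefficient of $\frac{(-t_1)^{k_1}\cdots(-t_r)^{k_r}}{k_1!\cdots k_r!}$ in that summand is a $\Q$-linear combination of finitely many values $\zeta_*(-j_1,\dots,-j_i)$ with $i\le r$ and $j_1+\cdots+j_i=k_1+\cdots+k_r$. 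Extracting this coefficient on both sides of \eqref{eqn:EMS=GZ,MP in intro} thus exhibits $\zeta_{\EMS}(-k_1,\dots,-k_r)$ as an explicit finite $\Q$-linear combination of the $\zeta_*(-j_1,\dots,-j_i)$ of depth at most $r$; taking $\zeta_*=\zeta_{\GZ}$ and then $\zeta_*=\zeta_{\MP}$ gives the two claimed representations.

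I expect the only real point to be the verification in the first paragraph that the normalization of Definition~\ref{eqn:gen fun of ren val of har type} is the one shared by the $\GZ$- and $\MP$-constructions: although conceptually routine, it must be done with care, since a mismatch of the regularization variable would alter the right-hand side of \eqref{eqn:EMS=GZ,MP in intro}. By contrast, the coefficient extraction of the second paragraph is purely the finite combinatorics of the substitution $t_i\mapsto u_i$ indexed by $\mathcal P(r,i)$ together with the weight grading, and presents no difficulty.
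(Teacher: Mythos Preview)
Your approach is correct and essentially the same as the paper's: both reduce the corollary to Theorem~\ref{thm:EMS=GZ,MP} by observing that the GZ and MP systems are legitimate instances of renormalized values of harmonic type, and then read off the finite-linear-combination statement coefficientwise. The paper's proof is shorter only because it can cite Remark~\ref{remark:difference of def of zetaGZ and zetaMP}, where the membership $\mathfrak z_{\GZ},\mathfrak z_{\MP}\in X_{\leq0}$ is already recorded; your worry about matching normalization conventions is therefore unnecessary here (and note that Theorem~\ref{thm: EMSZ} concerns the structure of $X_{\C,\zeta^*}$ in general, not properties of GZ or MP specifically --- the right internal reference is that Remark).
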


Our plan in this paper is the following.
In \S \ref{Renormalized values of shuffle type}, we recall the definition of the renormalized values introduced in \cite{EMS1} and recall some properties.
In \S \ref{Renormalized values related with harmonic relations}, we treat the problem posed by Singer (\cite{Singer}) which is on a comparison problem between the renormalized values of shuffle type and harmonic type.
In \S \ref{sub:Relationship between renormalized values}, we settle the problem by giving a universal presentation of the renormalized values of \cite{EMS1} as finite linear combinations of any renormalized values of harmonic type (Theorem \ref{thm:EMS=GZ,MP}).
%%%%%%%%%%%%%%%%%%%%%%%%%%%%%%%%Section1%%%%%%%%%%%%%%%%%%%%%%%%%%%%%%%%%%%%%%
\section{Renormalized values of shuffle type}\label{Renormalized values of shuffle type}

In this section, we recall the definition of the renormalized values introduced in \cite{EMS1}.
Let $L:=\{d,y\}$ and put $L^*$ to be the non-commutative free monoid generated by $L$.
We consider the two variables non-commutative polynomial $\Q$-algebra $\Q\langle L\rangle$ with the empty word $\emp$.
We define the product $\shuffle_0: \Q\langle L\rangle^{\otimes2} \rightarrow \Q\langle L\rangle$ by $\emp\shuffle_0 w:=w\shuffle_0\emp:=w$ and
\begin{align*}
	yu\shuffle_0v &:= u\shuffle_0yv:=y(u\shuffle_0v), \\
	du\shuffle_0dv &:= d(u\shuffle_0dv) - u\shuffle_0d^2v,
\end{align*}
for any words $w,u,v\in L^*$.
Then $(\mathbb{Q}\langle L\rangle, \shuffle_0)$ forms a unitary, nonassociative, noncommutative $\mathbb{Q}$-algebra.
We define
\begin{equation*}
	\mathcal{T}_-:= \langle \{wd\ | \ w \in L^*\}\rangle_{\mathbb{Q}},
\end{equation*}
that is, to be the $\Q$-linear subspace of $\mathbb{Q}\langle L\rangle$ linearly generated by words ending in $d$.
We define
\begin{equation*}
	\mathcal{L}_-:= \langle d^k\{d(u \ \shuffle_0 \ v) -d u \ \shuffle_0 \ v - u \ \shuffle_0 \ dv\} \ |\ k \in \mathbb{N}_0,\ u,v \in L^*y \cup \{{\bf 1}\} \ \rangle_{(\mathbb{Q}\langle L\rangle,\shuffle_0)},
\end{equation*}
that is, to be the two-sided ideal of $(\mathbb{Q}\langle L\rangle,\shuffle_0)$ algebraically generated by the above elements.
We consider the $\mathbb{Q}$-linear subspace
\begin{equation*}
	\mathcal{S}_-:=\mathcal{T}_-+\mathcal{L}_-
\end{equation*}
 of $\mathbb{Q}\langle L\rangle$ generated by $\mathcal{L}_-$ and $\mathcal{T}_-$. This $\mathcal{S}_-$ also forms a two-sided ideal of $(\mathbb{Q}\langle L\rangle,\shuffle_0)$.
We put the quotient
\begin{equation}\label{eqn:def of H0}
	\mathcal{H}_0 := \mathbb{Q}\langle L \rangle /\mathcal{S}_-.
\end{equation}
Then $\mathcal{H}_0$ forms a connected, filtered, commutative and cocommutative Hopf algebra (cf. \cite[\S3.3.6]{EMS1}), whose product is equal to $\shuffle_0$ and whose coproduct is given by 
\begin{equation*}
	\Delta_0(w) := \sum_{\substack{S \subset [n]\\ S:{\rm admissible}}}w_S \otimes w_{\overline{S}},
\end{equation*}
for $w \in L^*y$. In the summation, $S$ may be empty. we put $n:={\rm wt}(w)$, $[n] := \{1,\dots,n\}$ and $\overline{S} := [n]\setminus S$. For $w:=x_1\cdots x_n\ (x_i\in L^*,\ i=1,\dots,n)$ and $S:=\{i_1,\dots,i_k\}$ with $1\leq i_1<\cdots<i_k\leq n$, we define $w_S:=x_{i_1}\cdots x_{i_k}$. We call the set $S$ {\it admissible} if both $w_S, w_{\overline{S}} \in L^*y \cup \{{\bf 1}\}$. See \cite[\S 3.3.8]{EMS1} for combinatorial method using polygons to compute $\Delta_0(w)$. We define the $\Q$-linear map $\tilde{\Delta}_0:\mathcal{H}_0\rightarrow\mathcal{H}_0\otimes\mathcal{H}_0$ by
\begin{equation}\label{eqn:reduced coproduct}
	\tilde{\Delta}_0(w) := \Delta_0(w)-1\otimes w-w\otimes 1 \quad (w \in Y),
\end{equation}
and we call $\tilde{\Delta}_0$ the {\it reduced coproduct}.

Let $\mathcal{A}:=\mathbb{Q}[\frac{1}{z},z]]:=\mathbb{Q}[[z]][\frac{1}{z}]$ be the algebra consisting of all Laurent series. And we decompose it as $\mathcal{A}=\mathcal{A}_-\oplus\mathcal{A}_+$ where $\mathcal{A}_-:={\frac{1}{z}\mathbb{Q}[\frac{1}{z}]}$ and $\mathcal{A}_+:=\mathbb{Q}[[z]]$.
Let $\mathcal{H}$ be a Hopf algebra over $\mathbb{Q}$ and $\mathcal{L}(\mathcal{H},\mathcal{A})$ be the set of $\mathbb{Q}$-linear maps from $\mathcal{H}$ to $\mathcal{A}$. We define the {\it convolution} $\phi \star \psi \in \mathcal{L}(\mathcal{H},\mathcal{A})$ by
\begin{equation*}
	\phi \star \psi:=m_{\mathcal{A}}\circ(\phi\otimes\psi)\circ\Delta_{\mathcal{H}}
\end{equation*}
for $\mathbb{Q}$-linear maps $\phi\ \mbox{and}\ \psi \in \mathcal{L}(\mathcal{H},\mathcal{A})$.
Let $\mathcal{H}$ be a Hopf algebra over $\mathbb{Q}$ and $\mathcal{A}$ be a $\mathbb{Q}$-algebra. The subset
\begin{equation*}
	G(\mathcal{H},\mathcal{A}) := \{\phi \in \mathcal{L}(\mathcal{H},\mathcal{A})\ |\ \phi({\bf 1}_{\mathcal H})={\bf 1}_{\mathcal{A}}\}
\end{equation*}t
endowed with the above convolution product $\star$ forms a group. The unit is given by a map $e =u_{\mathcal{A}}\circ\varepsilon_{\mathcal{H}}$.
The following theorem is the fundamental tool of Connes and Kreimer (\cite{CK}) in the renormalization procedure of perturbative quantum field theory.
\begin{thm}[\cite{CK}, \cite{EMS1}, \cite{Man}: {\bf algebraic Birkhoff decomposition}]\label{thm:algebraic Birkhoff decomposition}
For $\phi \in G(\mathcal{H},\mathcal{A})$, there are unique linear maps $\phi_+:\mathcal{H}\rightarrow \mathcal{A}_+$ and $\phi_-:\mathcal{H}\rightarrow \mathbb{Q}\oplus\mathcal{A}_-$ with $\phi_-({\bf 1})=1\in\mathbb{Q}$ such that
\begin{equation*}
	\phi=\phi_-^{\star-1} \star \phi_+.
\end{equation*}
Moreover the maps $\phi_-$ and $\phi_+$ are algebra homomorphisms if $\phi$ is an algebra homomorphism.
\end{thm}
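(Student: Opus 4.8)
The plan is to prove existence via Bogoliubov's recursive formula, exploiting the connected filtered structure of $\mathcal{H}$, and then to deduce uniqueness and the multiplicativity statement from the construction. Write $R\colon\mathcal{A}\to\mathcal{A}$ for the projection onto $\mathcal{A}_-$ with kernel $\mathcal{A}_+$, so that $1-R$ is the projection onto $\mathcal{A}_+$. Since both $\mathcal{A}_-=\frac1z\mathbb{Q}[\frac1z]$ and $\mathcal{A}_+=\mathbb{Q}[[z]]$ are closed under multiplication, a short computation shows that $R$ satisfies the Rota--Baxter identity of weight $-1$,
$$
R(a)R(b)=R\bigl(R(a)\,b+a\,R(b)-ab\bigr)\qquad(a,b\in\mathcal{A}).
$$
Because $\mathcal{H}$ is connected and filtered, every $x$ in the augmentation ideal $\ker\varepsilon_{\mathcal H}$ satisfies $\Delta_{\mathcal H}(x)={\bf 1}\otimes x+x\otimes{\bf 1}+\sum_{(x)}x'\otimes x''$ with all $x',x''$ of strictly smaller filtration degree. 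I would define $\phi_-({\bf 1}):=1$ and recursively
$$
\phi_-(x):=-R\Bigl(\phi(x)+\sum_{(x)}\phi_-(x')\,\phi(x'')\Bigr)\qquad(x\in\ker\varepsilon_{\mathcal H}),
$$
which is well defined since only lower-degree values of $\phi_-$ occur on the right. By construction $\phi_-$ maps $\ker\varepsilon_{\mathcal H}$ into $\mathcal{A}_-$, so $\phi_-\colon\mathcal{H}\to\mathbb{Q}\oplus\mathcal{A}_-$ with $\phi_-({\bf 1})=1$, and in particular $\phi_-\in G(\mathcal{H},\mathcal{A})$.

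Next I would set $\phi_+:=\phi_-\star\phi$. Expanding the convolution with the formula for $\Delta_{\mathcal H}(x)$ gives, for $x\in\ker\varepsilon_{\mathcal H}$,
$$
\phi_+(x)=\phi_-(x)+\phi(x)+\sum_{(x)}\phi_-(x')\,\phi(x'')=(1-R)\Bigl(\phi(x)+\sum_{(x)}\phi_-(x')\,\phi(x'')\Bigr)\in\mathcal{A}_+ ,
$$
together with $\phi_+({\bf 1})=1$; hence $\phi_+\in G(\mathcal{H},\mathcal{A})$ with image in $\mathcal{A}_+$. Since $\phi_-$ is $\star$-invertible, $\phi_-^{\star-1}\star\phi_+=\phi_-^{\star-1}\star\phi_-\star\phi=\phi$, which proves existence.

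For uniqueness, suppose $\phi=\alpha^{\star-1}\star\beta$ with $\alpha\colon\mathcal{H}\to\mathbb{Q}\oplus\mathcal{A}_-$, $\alpha({\bf 1})=1$, and $\beta\colon\mathcal{H}\to\mathcal{A}_+$. Rewrite this as $\alpha\star\phi=\beta$ and evaluate at $x\in\ker\varepsilon_{\mathcal H}$: one gets $\alpha(x)+\bigl(\phi(x)+\sum_{(x)}\alpha(x')\,\phi(x'')\bigr)=\beta(x)$, where the $x',x''$ have strictly smaller filtration degree. Arguing by induction on the filtration, the bracketed quantity is already determined, and then the direct sum decomposition $\mathcal{A}=\mathcal{A}_-\oplus\mathcal{A}_+$ together with $\alpha(x)\in\mathcal{A}_-$, $\beta(x)\in\mathcal{A}_+$ forces $\alpha(x)=\phi_-(x)$ and $\beta(x)=\phi_+(x)$. (Equivalently, $\alpha\star\phi_-^{\star-1}=\beta\star\phi_+^{\star-1}$ is valued in $\mathcal{A}_+$ and also in $\mathcal{A}_-$ on $\ker\varepsilon_{\mathcal H}$, hence equals the unit $e=u_{\mathcal A}\circ\varepsilon_{\mathcal H}$.) Thus $\alpha=\phi_-$ and $\beta=\phi_+$.

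The multiplicativity claim is where the genuine work lies, and I expect it to be the main obstacle. Assuming $\phi$ is an algebra homomorphism, one proves $\phi_-(xy)=\phi_-(x)\phi_-(y)$ for $x,y\in\ker\varepsilon_{\mathcal H}$ by induction on the sum of the filtration degrees of $x$ and $y$: expand $\phi_-(xy)$ by Bogoliubov's formula, use the Hopf-algebra compatibility (that $\Delta_{\mathcal H}$ is an algebra map) to rewrite the reduced coproduct of $xy$ in terms of those of $x$ and $y$, apply the induction hypothesis to the resulting lower-degree factors, and finally collapse the whole expression by repeated use of the weight $-1$ Rota--Baxter identity for $R$ recorded above. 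Matching the terms produced by $\Delta_{\mathcal H}(xy)$ against those produced by that identity is the delicate bookkeeping step. Once $\phi_-$ is a homomorphism, $\phi_+=\phi_-\star\phi$ is one as well, being a convolution of homomorphisms into the commutative algebra $\mathcal{A}$. (This is precisely the classical theorem of Connes--Kreimer, so one may also simply cite \cite{CK}, \cite{EMS1}, \cite{Man}.)
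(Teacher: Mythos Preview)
The paper does not supply its own proof of this theorem: it is stated as a quoted result with attributions to \cite{CK}, \cite{EMS1}, \cite{Man}, and is used as a black box to define $\phi_+$ in Definition~\ref{def:renormalized values}. Your proposal is a correct outline of the standard proof---Bogoliubov's recursion for existence, the direct-sum splitting for uniqueness, and the weight $-1$ Rota--Baxter identity for multiplicativity---exactly as in the cited references, so there is nothing to compare beyond noting that you have reproduced the classical argument the paper chose to cite rather than repeat.
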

We define the $\mathbb{Q}$-linear map $\phi:\mathcal{H}_0 \rightarrow \mathcal{A}$ by $\phi({\bf 1}):=1$ and for $k_1,\dots,k_r \in \mathbb{N}_0$,
\begin{equation*}
	d^{k_1}y\cdots d^{k_r}y \mapsto \phi(d^{k_1}y\cdots d^{k_r}y)(z) := \partial^{k_1}_z\left(x\partial^{k_2}_z\right)\cdots\left(x\partial^{k_r}_z\right)\left(x(z)\right)
\end{equation*}
where $x:=x(z) := \frac{e^z}{1-e^z} \in \mathcal{A}$ and $\partial_z$ is the derivative by $z$.
Then this map $\phi$ forms an algebra homomorphism (see \cite[Lemma 4.2]{EMS1}).
By applying Theorem \ref{thm:algebraic Birkhoff decomposition} to this map $\phi$, we get the algebra homomorphism $\phi_+:\mathcal H_0 \rightarrow \Q[[z]]$.
\begin{definition}[{\cite[\S 4.2]{EMS1}}]\label{def:renormalized values}
The {\it renormalized value} \footnote{If we follow the notations of \cite{EMS1}, it should be denoted by $\zeta_+(-k_r,\dots,-k_1)$.} $\zeta_{\scalebox{0.5}{\rm EMS}}(-k_1,\dots, -k_r)$ is defined by
\begin{equation*}
	\zeta_{\scalebox{0.5}{\rm EMS}}(-k_1,\dots, -k_r)
	:= \lim_{z \rightarrow 0}\phi_+(d^{k_r}y\cdots d^{k_1}y)(z)
\end{equation*}
for $k_1,\dots,k_r \in \mathbb{N}_0$.
\end{definition}
We have the following proposition.
\begin{prop}[{\cite[Proposition 3.3]{Comy1}}]\label{prop:simple recurrence formula of zetaEMS}
For $r \in \mathbb{N}_{\geq 2}$ and $k_1, \dots, k_r \in \mathbb{N}_0$, we have
\begin{equation*}\label{eqn:simple recurrence formula of zetaEMS}
	\zeta_{\scalebox{0.5}{\rm EMS}}(-k_1, \dots, -k_r)
	=\sum_{\substack{
	i + j=k_r \\
	i,j\geq0}}	\binom{k_r}{i}
	\zeta_{\scalebox{0.5}{\rm EMS}}(-i) \zeta_{\scalebox{0.5}{\rm EMS}}(-k_1,\dots, -k_{r-1}-j),
\end{equation*}
\end{prop}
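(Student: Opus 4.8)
The plan is to prove the recurrence by unwinding the algebraic Birkhoff decomposition together with the explicit form of the coproduct $\Delta_0$. Recall that $\zeta_{\scalebox{0.5}{\rm EMS}}(-k_1,\dots,-k_r)=\lim_{z\to0}\phi_+(w)(z)$ where $w:=d^{k_r}y\cdots d^{k_1}y$, and that $\phi_+=\phi_-\star\phi$ by Theorem \ref{thm:algebraic Birkhoff decomposition}, i.e. $\phi_+(w)=m_{\mathcal A}\circ(\phi_-\otimes\phi)\circ\Delta_0(w)$. First I would isolate the last letter: writing $w=d^{k_r}yw'$ with $w'=d^{k_{r-1}}y\cdots d^{k_1}y$, I would compute $\Delta_0(w)$ using the admissible-subset description. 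The key combinatorial observation is that the rightmost $y$ of $w$ (the one immediately after the block $d^{k_r}$) must lie in exactly one of the two factors, and the $d$'s in the block $d^{k_r}$ split between the two factors; isolating the contribution in which the whole suffix $w'$ lands in the right-hand tensor factor yields precisely the terms $d^i y\otimes d^j y w'$ with $i+j=k_r$, each appearing with multiplicity $\binom{k_r}{i}$, plus terms where $w'$ is split nontrivially.

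The second step is to apply $(\phi_-\otimes\phi)$ and pass to the limit $z\to0$. On the distinguished terms this gives $\sum_{i+j=k_r}\binom{k_r}{i}\,\phi_-(d^iy)(z)\cdot\phi(d^jyw')(z)$, and one checks—this is essentially the content of \cite[Lemma 4.2, \S4.2]{EMS1}—that $\lim_{z\to0}\phi_-(d^iy)(z)\cdot\phi(d^jyw')(z)$ reassembles into $\zeta_{\scalebox{0.5}{\rm EMS}}(-i)\,\zeta_{\scalebox{0.5}{\rm EMS}}(-k_1,\dots,-k_{r-1}-j)$, using the length-one case $\zeta_{\scalebox{0.5}{\rm EMS}}(-i)=\lim_{z\to0}\phi_+(d^iy)(z)$ and the identification of $\phi_+(d^jyw')$ with the generating object for $\zeta_{\scalebox{0.5}{\rm EMS}}(-k_1,\dots,-k_{r-1}-j)$. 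The shift $k_{r-1}\mapsto k_{r-1}+j$ comes from the fact that $d^j y d^{k_{r-1}}y\cdots$ is, after reading right-to-left as in Definition \ref{def:renormalized values}, the word associated to $(-k_1,\dots,-k_{r-1}-j)$ once the intermediate $y$ is absorbed; this absorption is exactly where the relations in $\mathcal S_-$ (in particular elements of $\mathcal T_-$ and $\mathcal L_-$) are used to rewrite $d^j y$ followed by a $d$-block as $d^{j}$ acting by $\partial_z$.

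The third and final step is to show that all the remaining terms—those in which $\Delta_0(w)$ splits the suffix $w'$ nontrivially—contribute nothing in the limit. This is the step I expect to be the main obstacle. The idea is that for such a term the left tensor factor is a word strictly longer than a single $d^iy$, so $\phi_-$ of it lands in $\mathcal A_-=\frac1z\mathbb Q[\frac1z]$ (it has no constant term, since $\phi_-({\bf 1})=1$ is the only unit contribution and we have already extracted it), while $\phi$ of the complementary right factor is a Laurent series whose pole order is controlled; one must verify that the product has no constant term, i.e. that the $z\to0$ limit vanishes. Making this precise requires a careful bookkeeping of pole orders of $\phi(d^{k_1}y\cdots d^{k_s}y)(z)$ near $z=0$—each $\partial_z$ and each factor $x(z)=\frac{e^z}{1-e^z}$ raises the pole order by one, so $\phi(w)$ has a pole of order exactly $\wt(w)$—matched against the fact that $\phi_-$ on a word of weight $m$ has pole order at most $m$; the admissibility constraint on $S$ forces the weights to add up in a way that leaves a strictly negative total degree for every non-distinguished term. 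Granting these pole estimates (which are implicit in \cite[\S4]{EMS1}), the recurrence in Proposition \ref{prop:simple recurrence formula of zetaEMS} follows by collecting the surviving terms.
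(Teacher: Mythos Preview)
The paper does not prove this proposition; it is quoted from \cite{Comy1}. The argument there, and the natural one given the setup of \S\ref{Renormalized values of shuffle type}, does not use the coproduct at all: it exploits the $\shuffle_0$-\emph{product} structure of $\mathcal H_0$ together with the fact (Theorem~\ref{thm:algebraic Birkhoff decomposition}) that $\phi_+$ is a $\shuffle_0$-algebra homomorphism. In $\mathcal H_0$ the letter $d$ acts as a derivation for $\shuffle_0$ (this is precisely what quotienting by $\mathcal L_-$ enforces), and the rule $yu\shuffle_0 v=y(u\shuffle_0 v)$ gives $y\shuffle_0 w'=yw'$ for $w'=d^{k_{r-1}}y\cdots d^{k_1}y$. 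Hence in $\mathcal H_0$
\[
d^{k_r}yw'\;=\;d^{k_r}(y\shuffle_0 w')\;=\;\sum_{i+j=k_r}\binom{k_r}{i}(d^{i}y)\shuffle_0(d^{j}w'),
\]
and since $d^jw'=d^{j+k_{r-1}}y\cdots d^{k_1}y$, applying the homomorphism $\phi_+$ and letting $z\to0$ yields the recurrence immediately.

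Your route through $\phi_+=\phi_-\star\phi$ and $\Delta_0$ has a genuine gap at the ``reassembly'' step. After applying $\phi_-\otimes\phi$ to one of your distinguished coproduct terms you obtain $\phi_-(d^iy)(z)\cdot\phi(d^jw')(z)$, and neither factor is a $\phi_+$-value: $d^iy$ is primitive, so $\phi_-(d^iy)$ is minus the projection of $\phi(d^iy)=\partial_z^i x(z)$ onto $\mathcal A_-$, a nonzero element of $z^{-1}\Q[z^{-1}]$ with a pole of order $i+1$, while $\phi(d^jw')$ has a pole of order $\wt(d^jw')$. Their product therefore has no limit as $z\to0$, let alone the value $\zeta_{\EMS}(-i)\,\zeta_{\EMS}(-k_1,\dots,-k_{r-1}-j)$. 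The renormalized factorization you want is $\phi_+(d^iy)\,\phi_+(d^jw')$, and that is not produced by a single piece of $\Delta_0(w)$ but by the entire Birkhoff recursion; the ``remaining'' terms you hope to discard are exactly what convert $\phi_-\cdot\phi$ into $\phi_+\cdot\phi_+$, so your pole-counting heuristic in the third step cannot succeed. (A smaller slip: the coproduct piece you isolate is $d^iy\otimes d^jw'$, not $d^iy\otimes d^j y w'$---the first $y$ goes into $w_S$, so $w_{\bar S}$ starts with the $j$ leftover $d$'s concatenated directly onto $w'$, which already gives the shift $k_{r-1}\mapsto k_{r-1}+j$ without any ``absorption'' via $\mathcal S_-$.)
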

We define the generating function $Z_{\scalebox{0.5}{\rm EMS}}(t_1,\dots,t_r)$ of the renormalized values $\zeta_{\scalebox{0.5}{\rm EMS}}(-k_1,\dots, -k_r)$ by
\begin{equation}\label{eqn:gen fun of zetaEMS}
Z_{\scalebox{0.5}{\rm EMS}}(t_1,\dots,t_r)
:=\sum_{k_1,\dots,k_r\geq0}
\frac{(-t_1)^{k_1}\cdots (-t_r)^{k_r}}{k_1!\cdots k_r!}\zeta_{\scalebox{0.5}{\rm EMS}}(-k_1,\dots, -k_r).
\end{equation}
In \cite{Comy1}, the explicit formula of this generating function is given as follow.
\begin{thm}[{\cite[Corollary 3.9]{Comy1}}]
We have
\begin{equation*}
	Z_{\scalebox{0.5}{\rm EMS}}(t_1,\dots,t_r)
	=\prod_{i=1}^r
	\frac{(t_i+\cdots+t_r)-(e^{t_i+\cdots+t_r}-1)}{(t_i+\cdots+t_r)(e^{t_i+\cdots+t_r}-1)}.
\end{equation*}
\end{thm}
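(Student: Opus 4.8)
The plan is to prove the formula by induction on $r$, using Proposition~\ref{prop:simple recurrence formula of zetaEMS} to turn the recursion for the numbers $\zeta_{\EMS}(-k_1,\dots,-k_r)$ into a multiplicative recursion for their generating functions. Throughout, write $g(u):=\frac{u-(e^{u}-1)}{u(e^{u}-1)}$, so that the claimed identity reads $Z_{\EMS}(t_1,\dots,t_r)=\prod_{i=1}^{r}g(t_i+\cdots+t_r)$.

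\textbf{Base case $r=1$.} First I would check that every word $d^{k}y$ is primitive for $\Delta_0$: for $S\subset[k+1]$ the pair $(w_S,w_{\overline S})$ is admissible only for $S=\emptyset$ and $S=[k+1]$, since any other choice of $S$ produces a factor ending in $d$; hence $\tilde\Delta_0(d^{k}y)=0$. On primitive elements the algebraic Birkhoff decomposition of Theorem~\ref{thm:algebraic Birkhoff decomposition} reduces to $\phi_+(d^{k}y)=\pi_+\bigl(\phi(d^{k}y)\bigr)$, where $\pi_+\colon\mathcal A\to\mathcal A_+$ is the projection along $\mathcal A_-$. Since $\phi(d^{k}y)(z)=\partial_z^{k}\bigl(x(z)\bigr)$, passing to the limit $z\to0$ gives, for every $k\in\N_0$,
\[
\zeta_{\EMS}(-k)=k!\,[z^{k}]\,x(z)=k!\,[z^{k}]\,\pi_+\bigl(x(z)\bigr),
\]
where $[z^{k}]$ denotes the coefficient of $z^{k}$. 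Substituting this into \eqref{eqn:gen fun of zetaEMS} and using that $\pi_+x$ is a genuine power series, one gets
\[
Z_{\EMS}(t)=\sum_{k\geq0}(-t)^{k}\,[z^{k}]\,\pi_+\bigl(x(z)\bigr)=\bigl(\pi_+x\bigr)(-t)=\frac{e^{-t}}{1-e^{-t}}-\frac{1}{t}=\frac{t-(e^{t}-1)}{t(e^{t}-1)}=g(t).
\]

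\textbf{Inductive step ($r\geq2$).} Next I would multiply the identity of Proposition~\ref{prop:simple recurrence formula of zetaEMS} by $\frac{(-t_1)^{k_1}\cdots(-t_r)^{k_r}}{k_1!\cdots k_r!}$ and sum over all $k_1,\dots,k_r\geq0$. Writing $k_r=i+j$ and using $\binom{k_r}{i}/k_r!=1/(i!\,j!)$, the summation over $i$ factors off as $Z_{\EMS}(t_r)$; in the remaining sum, reindexing the pair $(k_{r-1},j)$ by $k_{r-1}':=k_{r-1}+j$ and applying the identity $\sum_{j=0}^{k'}\frac{(-t_{r-1})^{k'-j}}{(k'-j)!}\frac{(-t_r)^{j}}{j!}=\frac{(-(t_{r-1}+t_r))^{k'}}{k'!}$ collapses everything to $Z_{\EMS}(t_1,\dots,t_{r-2},t_{r-1}+t_r)$. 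Thus
\[
Z_{\EMS}(t_1,\dots,t_r)=Z_{\EMS}(t_r)\cdot Z_{\EMS}(t_1,\dots,t_{r-2},t_{r-1}+t_r).
\]
Applying the induction hypothesis to the second factor, with the $(r-1)$ variables $s_i:=t_i$ for $i\leq r-2$ and $s_{r-1}:=t_{r-1}+t_r$, one has $s_i+\cdots+s_{r-1}=t_i+\cdots+t_r$ for every $i\leq r-1$, so that this factor equals $\bigl(\prod_{i=1}^{r-2}g(t_i+\cdots+t_r)\bigr)g(t_{r-1}+t_r)$; multiplying by $Z_{\EMS}(t_r)=g(t_r)$ gives $\prod_{i=1}^{r}g(t_i+\cdots+t_r)$, as desired.

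\textbf{Main obstacle.} The only genuinely delicate point is the base case: one must justify carefully that $d^{k}y$ is primitive (so that the Birkhoff decomposition reduces to the projection onto the holomorphic part) and then track the sign and the pole subtraction in the passage from $x(z)$ to $\pi_+x$ to $\bigl(\pi_+x\bigr)(-t)$. Everything after that is a formal manipulation of exponential generating series together with the binomial theorem, which I expect to be routine.
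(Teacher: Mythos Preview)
Your argument is correct. The base case is handled properly: the admissibility condition forces $S\in\{\emptyset,[k+1]\}$ for $w=d^ky$, so $d^ky$ is primitive and $\phi_+(d^ky)=\pi_+(\partial_z^k x)$; since the polar part of $x(z)=\frac{e^z}{1-e^z}$ is the single term $-1/z$, the operations $\pi_+$ and $\partial_z^k$ commute here, whence $\zeta_{\EMS}(-k)=k!\,[z^k]x(z)$ and $Z_{\EMS}(t)=(\pi_+x)(-t)=\frac{1}{e^t-1}-\frac{1}{t}=g(t)$. The inductive step is a clean generating-function translation of Proposition~\ref{prop:simple recurrence formula of zetaEMS}: the binomial splitting $\binom{k_r}{i}/k_r!=1/(i!\,j!)$ factors off $Z_{\EMS}(t_r)$, and the binomial-theorem resummation in $(k_{r-1},j)$ produces $Z_{\EMS}(t_1,\dots,t_{r-2},t_{r-1}+t_r)$, exactly as you wrote.

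As for comparison with the paper: note that the present paper does \emph{not} prove this theorem at all --- it is quoted as \cite[Corollary~3.9]{Comy1} and used as input (in particular to derive \eqref{eqn:recurrence formula of gen fun}). Your proof therefore supplies something the paper omits. Your route is essentially to combine two results imported from \cite{Comy1} (the recursion of Proposition~\ref{prop:simple recurrence formula of zetaEMS} and the depth-one value) and close the induction directly; this is presumably in the spirit of the original source, since both ingredients come from there, though in \cite{Comy1} the result is labelled a \emph{corollary} of intermediate material (3.4--3.8) that may package the same computation differently. In any case, your self-contained derivation of the multiplicative recursion $Z_{\EMS}(t_1,\dots,t_r)=Z_{\EMS}(t_r)\,Z_{\EMS}(t_1,\dots,t_{r-2},t_{r-1}+t_r)$ is a nice bonus: iterating it immediately yields \eqref{eqn:recurrence formula of gen fun} without appealing to the closed product formula.
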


\begin{remark}
By the above theorem, we have
\begin{equation}\label{eqn:recurrence formula of gen fun}
	Z_{\scalebox{0.5}{\rm EMS}}(t_1,\dots,t_r)
	=Z_{\scalebox{0.5}{\rm EMS}}(t_r)Z_{\scalebox{0.5}{\rm EMS}}(t_{r-1}+t_r)
	\cdots Z_{\scalebox{0.5}{\rm EMS}}(t_1+\cdots+t_r).
\end{equation}
\end{remark}

In \S \ref{sub:Relationship between renormalized values}, we will give an explicit relationship between  $\zeta_{\scalebox{0.5}{\rm EMS}}(-k_1,\dots, -k_r)$ and renormalized values of harmonic type introduced in \S \ref{Renormalized values related with harmonic relations}.

%%%%%%%%%%%%%%%%%%%%%%%%%%section 2%%%%%%%%%%%%%%%%%%%%%%%%%%%%%%%%%%%%%%%%%%%%
\section{Renormalized values of harmonic type}\label{Renormalized values related with harmonic relations}

In this section, we reformulate a certain problem between renormalized values posed in the final line of [S] as Problem \ref{prob: Singer problem}.
We start with the following problem.
\begin{prob}[Renormalization problem of MZVs (cf. {\cite[Problem 1]{Singer}})]\label{prob:reno prob}
Extend MZVs to all integer points such that
\begin{enumerate}
\renewcommand{\labelenumi}{(\Alph{enumi}).}
	\item the values coincide with the special values of analytic continuation of MZFs,
	\item the harmonic relations or the shuffle relations are preserved.
\end{enumerate}
\end{prob}
Based on \cite{EMSZ}, we recall the solutions of this problem.
Let $\mathcal H:=\Q\langle z_k\ |\ k\in\Z \rangle$ be the non-commutative polynomial algebra with the empty word $\emp$ generated by the letters $z_k$.
Then $(\mathcal H, *, \Delta)$ is a Hopf algebra.
Here, the product $*$ is the harmonic product, which is given by $w*\emp:=\emp*w:=w$ and 
\begin{equation}\label{eqn:def of harmonic}
z_kw*z_lw':=z_k(w*z_lw')+z_l(z_kw*w')+z_{k+l}(w*w'),
\end{equation}
for $k,l\in\Z$ and words $w,w'$ in $\mathcal H$, and the coproduct $\Delta$ is the deconcatenation coproduct.
\begin{definition}[{\cite[Definition 4.2]{EMSZ}}]\label{def: non-singular}
We call a word $w=z_{k_1}\cdots z_{k_r}$ in $\mathcal H$ {\it non-singular} if all of the following conditions hold:
\begin{align*}
	&k_r \neq 1, \\
	&k_{r-1}+k_r \neq 2,1,0,-2,-4,\dots, \\
	&k_{r-i+1}+\cdots+k_r \neq i-n\quad (3\leq i\leq r,\ n\in\mathbb{N}_0). 
\end{align*}
We denote $N \subset \mathcal H$ to be the $\C$-vector space spanned by all non-singular words.
\end{definition}
We define the $\C$-linear map $\zeta^*: N \rightarrow \C$ by %$\zeta^*(\emp):=1$ and
$$
\zeta^*(z_{k_1}\cdots z_{k_r}):=\zeta(k_1, \dots, k_r),
$$
for $z_{k_1}\cdots z_{k_r}\in N$, where the right hand side is the special values of analytic continuation of MZF.
We put $G_\C$ to be the set of all algebra homomorphisms from $\mathcal H$ to $\C$, and put the convolution product $\star:G_\C \otimes G_\C \rightarrow G_\C$ by
$$
f \star g:=m \circ (f\otimes g) \circ \Delta,
$$
for any $f,g\in G_\C$, where $m$ is the ordinary product of $\C$.
Then $(G_\C, \star)$ forms a group.
\begin{definition}[{\cite[Definition 4.5]{EMSZ}}]
We define the set $X_{\C,\zeta^*}$ of all solutions of Problem \ref{prob:reno prob} for harmonic relations by
$$
X_{\C,\zeta^*} := \{ \phi\in G_\C\ |\ \phi|_N=\zeta^* \},
$$
and we define the set $T_\C$ called the {\it renormalization group} by
$$
T_{\C} := \{ \phi\in G_\C\ |\ \phi|_N=0 \}.
$$
\end{definition}
\begin{thm}[{\cite{EMSZ} (cf. \cite[Theorem 16]{Singer})}]\label{thm: EMSZ}
We have:
\begin{enumerate}
\renewcommand{\labelenumi}{{\rm (\alph{enumi}).}}
	\item The set $T_\C$ forms a subgroup of $(G_\C,\star)$.
	\item The left group action
	\begin{align*}
		T_\C \times X_{\C,\zeta^*} & \longrightarrow X_{\C,\zeta^*}, \\
		(\alpha,\phi) & \longmapsto \alpha \star \phi
	\end{align*}
	is free and transitive.
	\item The cardinality of the set $X_{\C,\zeta^*}$ is infinite.
\end{enumerate}
\end{thm}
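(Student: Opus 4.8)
The plan is to reduce the whole statement to a single combinatorial observation about non-singular words, after which everything is the standard Connes--Kreimer formalism for the connected filtered (by word length) Hopf algebra $\mathcal H$. The observation is: \emph{every nonempty suffix $z_{k_{j+1}}\cdots z_{k_r}$ of a non-singular word $z_{k_1}\cdots z_{k_r}$ is again non-singular}. This is immediate from Definition \ref{def: non-singular}, because the conditions defining non-singularity constrain only the tail partial sums $k_{i+1}+\cdots+k_r$, so passing to a suffix of length $s$ merely discards the conditions indexed by integers $>s$; in Hopf-theoretic terms, $N$ spans a one-sided coideal of $(\mathcal H,\Delta)$, on the side along which elements of the renormalization group are convolved. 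I expect this --- together with keeping track of which tensor factor carries which linear map --- to be the only point where the arithmetic of the poles \eqref{all pole of MZF} enters; the remaining steps are formal.

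Granting the observation, part (a) is a direct computation. The unit $e$ of $(G_\C,\star)$ sends $\emp\mapsto 1$ and every non-empty word to $0$, so $e\in T_\C$. For $\alpha,\beta\in T_\C$ and a non-singular word $w$ I would expand $(\alpha\star\beta)(w)$ via the deconcatenation coproduct (normalised, as in \cite{EMSZ}, so that the left factor of a convolution is evaluated on a suffix): each summand is then $\alpha(w)\beta(\emp)=0$, or $\alpha(\emp)\beta(w)=0$, or of the form $\alpha(v)\,\beta(u)$ with $v$ a nonempty proper suffix of $w$ --- non-singular by the observation, hence annihilated by $\alpha$. Thus $\alpha\star\beta\in T_\C$. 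Applying the same expansion to $e=\alpha\star\alpha^{\star-1}$ collapses that identity to $\alpha^{\star-1}(w)=0$ on non-singular $w$, so $\alpha^{\star-1}\in T_\C$; hence $T_\C$ is a subgroup. The identical expansion of $(\alpha\star\phi)(w)$ for $\alpha\in T_\C$, $\phi\in X_{\C,\zeta^*}$ and $w$ non-singular leaves exactly one nonzero summand, $\alpha(\emp)\phi(w)=\zeta^*(w)$, so $\alpha\star\phi\in X_{\C,\zeta^*}$ and the map of part (b) is well defined.

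For part (b), freeness is cancellation in the group $(G_\C,\star)$: $\alpha\star\phi=\phi=e\star\phi$ forces $\alpha=e$. For transitivity, given $\phi,\psi\in X_{\C,\zeta^*}$ I would put $\alpha:=\psi\star\phi^{\star-1}\in G_\C$, so that $\alpha\star\phi=\psi$, and then prove $\alpha\in T_\C$ by induction on the length $r$ of a non-singular word $w$: evaluating $\psi(w)=(\alpha\star\phi)(w)$ and using the observation, the relation reduces to $\zeta^*(w)=\alpha(w)+\zeta^*(w)+\sum_{0<j<r}\alpha(z_{k_{j+1}}\cdots z_{k_r})\,\phi(z_{k_1}\cdots z_{k_j})$, where each $z_{k_{j+1}}\cdots z_{k_r}$ is non-singular of length $<r$; the inductive hypothesis kills the sum (the base case $r=1$ has empty sum), so $\alpha(w)=0$. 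Hence $\alpha\in T_\C$ carries $\phi$ to $\psi$, and the action is transitive.

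Finally part (c). One first invokes $X_{\C,\zeta^*}\neq\emptyset$, which is exactly what the harmonic renormalizations of \cite{GZ}, \cite{MP}, \cite{EMS2} provide. By (b), $X_{\C,\zeta^*}$ is then a torsor under $T_\C$, so $|X_{\C,\zeta^*}|=|T_\C|$, and since $\mathcal H$ is a connected filtered $\Q$-Hopf algebra the group $(G_\C,\star)$ is pro-unipotent, hence torsion-free; thus it suffices to produce one element of $T_\C\setminus\{e\}$ --- equivalently, two distinct elements of $X_{\C,\zeta^*}$, whose ``ratio'' $\psi\star\phi^{\star-1}$ then lies in $T_\C\setminus\{e\}$. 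Concretely, the value $\phi(z_1)$ (the renormalized value of $\zeta(1)$) is not constrained by $\phi|_N=0$, because $z_1$ is singular, and it can be assigned any complex number while still extending to a character vanishing on $N$; this is made rigorous using that $(\mathcal H,*)$ is a free commutative (polynomial) algebra, which renders the defining equations of $T_\C$ triangular with respect to word length. The step I expect to be the real obstacle is not any of the formal manipulations but fixing the orientation conventions in the observation and the convolution expansions (so that an element of $T_\C$ always meets a suffix of $w$), and, for (c), cleanly importing the non-emptiness of $X_{\C,\zeta^*}$ and the non-triviality of $T_\C$ from \cite{EMSZ}.
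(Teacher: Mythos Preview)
The paper does not prove this theorem: it is quoted from \cite{EMSZ} (with a pointer to \cite[Theorem 16]{Singer}) as background for Problem~\ref{prob: Singer problem}, and the text moves on immediately to the definition of $\mathcal H_{\leq0}$. There is therefore no in-paper argument to compare your proposal against.

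That said, your outline is the standard one and is essentially correct. The combinatorial observation that every nonempty suffix of a non-singular word is again non-singular is exactly right, and it is indeed the only place where the pole structure \eqref{all pole of MZF} enters. The one genuinely delicate point --- which you already flag --- is the orientation: with the usual deconcatenation $\Delta(z_{k_1}\cdots z_{k_r})=\sum_j z_{k_1}\cdots z_{k_j}\otimes z_{k_{j+1}}\cdots z_{k_r}$ the \emph{left} convolution factor sees prefixes, and prefixes of non-singular words need not be non-singular (for instance $z_1z_5$ is non-singular while its prefix $z_1$ is not). For the left $T_\C$-action to preserve $X_{\C,\zeta^*}$ one must use the convention in which the first tensor factor carries the suffix (equivalently the opposite coproduct, or a right action); you should pin this down explicitly rather than leave it as a caveat, since with the wrong orientation the computation in (b) simply fails. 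Once this is fixed, (a) and the well-definedness, freeness, and transitivity in (b) go through exactly as you wrote.

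For (c), your reduction via torsion-freeness of the pro-unipotent group $G_\C$ to exhibiting a single nontrivial element of $T_\C$ is clean. The remaining step --- building a character vanishing on $N$ with, say, $\phi(z_1)\neq0$ --- is where your sketch is thinnest: invoking that $(\mathcal H,*)$ is polynomial on Lyndon words is the right tool, but you then need to check that the vanishing conditions imposed by $N$ on longer words do not force $\phi(z_1)=0$. This can be done by a weight/length filtration argument (the constraint coming from a non-singular word of length $r$ determines the value on one new Lyndon word of length $r$ in terms of shorter data, and $z_1$ never appears as that new generator since $z_1\notin N$), but it is not as immediate as your last sentence suggests. An alternative that avoids this entirely is to note that the renormalizations of \cite{GZ} and \cite{MP} are known to disagree at some non-positive arguments, which directly yields two distinct points of $X_{\C,\zeta^*}$.
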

We put $\mathcal H_{\leq0}$ to be the subalgebra of $\mathcal H$ generated by $\{ z_k\ |\ k\in\Z_{\leq0} \}$.
Then it is immediate that $\mathcal H_{\leq0}$ forms a Hopf subalgebra of $(\mathcal H, *, \Delta)$.
We define $G_{\leq0}$ to be the set of all restrictions of elements in $G_\C$ to $\mathcal H_{\leq0}$.
\begin{definition}\label{def:ren val of har type}
We define the set $X_{\leq0}$ of {\it renormalized values (at non-positive integer points) of harmonic type} by
$$
X_{\leq0} := \{ \phi\in G_{\leq0}\ |\ \phi|_{N \cap \mathcal H_{\leq0}}=\zeta^* \}.
$$
\end{definition}
By using this, we reformulate the problem which is mentioned in \cite{Singer} as follows.
\begin{prob}[{The final line of \cite{Singer}}]\label{prob: Singer problem}
Which renormalized value of harmonic type have an explicit relationship with the renormalized values $\zeta_{\EMS}(-k_1,\dots,-k_r)$ (defined in Definition \ref{def:renormalized values})?
\end{prob}

\begin{remark}\label{remark:difference of def of zetaGZ and zetaMP}
We recall that the renormalized values (denoted by $\zeta_{\GZ}(k_1,\dots,k_r)$) in \cite{GZ} are defined for $k_1,\dots,k_r\in\Z_{\leq0}$, and the ones (denoted by $\zeta_{\MP}(k_1,\dots,k_r)$) in \cite{MP} are defined on $k_1,\dots,k_r\in\Z$. 
Hence, $\zeta_{\MP}(k_1,\dots,k_r)$ can be regarded as an element of $X_{\C,\zeta^*}$ but it is not clear whether there is an element $\phi$ of $X_{\C,\zeta^*}$ such that
$$
\phi(z_{k_1}\cdots z_{k_r})=\zeta_{\GZ}(k_1, \dots, k_r),
$$
for $k_1,\dots,k_r\in\Z_{\leq0}$.
In any case, we have elements $\mathfrak z_{\GZ}$ and $\mathfrak z_{\MP}$ of $X_{\leq0}$ which satisfy
$$
\mathfrak z_{\GZ}(z_{k_1}\cdots z_{k_r})=\zeta_{\GZ}(k_1,\dots,k_r),
\qquad
\mathfrak z_{\MP}(z_{k_1}\cdots z_{k_r})=\zeta_{\MP}(k_1,\dots,k_r),
$$
for $k_1,\dots,k_r\in\Z_{\leq0}$.
\end{remark}
\section{Explicit relationship}\label{sub:Relationship between renormalized values}
\indent

%By calculating each generating functions of $\zeta_{\GZ}(-k_1,\dots,-k_r)$ and $\zeta_{\MP}(-k_1,\dots,-k_r)$, we prove that the renormalized values $\zeta_{\EMS}(-k_1,\dots,-k_r)$ in \cite{EMS1} (see Definition \ref{def:renormalized values} for detail)
%can be represented by a finite linear combination of either $\zeta_{\bullet}(-k_1,\dots,-k_r)$ for $\bullet=\scalebox{0.8}{\rm GZ} \mbox{ or } \scalebox{0.8}{\rm MP}$.

In this section, we settle Problem \ref{prob: Singer problem} in Theorem \ref{thm:EMS=GZ,MP}.
From now on, we assume that $\mathfrak z$ is an element of $X_{\leq0}$, that is, $\mathfrak z$ is an algebra homomorphism from $\mathcal H_{\leq0}$ to $\C$ and $\mathfrak z$ satisfies
\begin{align}
\mathfrak z |_{N \cap \mathcal H_{\leq0}}=\zeta^*.
\end{align}
By extension of scalars $\C [[t_1,\dots,t_r]]\otimes_\C \mathcal H=\mathcal H_{\leq0} [[t_1,\dots,t_r]]$, we sometimes regard $\mathfrak z$ as a map from $\mathcal H_{\leq0} [[t_1,\dots,t_r]]$ to $\C [[t_1,\dots,t_r]]$.
\begin{remark}\label{cond: harmonic and Riemann zeta}
Because $z_{-k}$ ($k\geq0$) is an element of the vector space $N$ (introduced in Definition \ref{def: non-singular}), we have
$$
\mathfrak z(z_{-k})=\zeta(-k),
$$
for $k\geq0$.
\end{remark}

Let $T:=\{t_i\}_{i\in\N}$. We put $T_{\Z}$ to be the free $\Z$-module generated by all elements of $T$, that is, $T_{\Z}$ is defined by
$$
T_{\Z}:=\left\{\sum_{i=1}^na_it_i \ \middle|\ n\in\N, a_i\in\Z \right\}.
$$
We define $T_{\Z}^\bullet$ to be the non-commutative free monoid generated by all elements of $T_{\Z}$ with the empty word $\emptyset$. We denote each element $\omega=u_1\cdots u_r\in T_\Z^\bullet$ with $u_1,\dots,u_r\in T_\Z$ by $\omega=[u_1,\dots, u_r]$ as a sequence and we denote the concatenation $uv$ with $u,v\in T_\Z^\bullet$ by $[u,v]$.
The length of $\omega=[u_1,\dots, u_r]$ is defined to be $l(\omega)=r$.
We set $\mathcal A_T:=\C\langle T_{\Z}\rangle$ to be the non-commutative polynomial ring generated by $T_{\Z}$.
We define the harmonic product $*:\mathcal A_T^{\otimes 2}\rightarrow\mathcal A_T$ by $\emptyset*w:=w*\emptyset:=w$ and 
\begin{equation}\label{eqn:def of harmonic on AT}
	[u_1,w_1]*[u_2,w_2]:=[u_1,w_1*[u_2,w_2]]+[u_2,[u_1,w_1]*w_2]+[u_1+u_2,w_1*w_2],
\end{equation}
for $w,w_1,w_2\in T_\Z^\bullet$ and $u_1,u_2\in T_\Z$. Then the pair $(\mathcal A_T,*)$ is a commutative, associative, unital $\C$-algebra.
We define
\footnote{The harmonic product is sometimes called the quasi-shuffle product. The symbol ${\rm QSh}$ comes from this name.}
the family $\{\QSh{\omega}{\eta}{\alpha}\}_{\omega,\eta,\alpha\in T_\Z^\bullet}$ in $\Z$ by
$$
\omega * \eta = \sum_{\alpha\in T_\Z^\bullet}\QSh{\omega}{\eta}{\alpha}\alpha.
$$
\begin{example}\label{ex:harmonic 1*r}
For $r\geq1$, we have
\begin{align*}
	[t_{r+1}]*[t_1,\dots,t_r]
	&=\sum_{j=1}^{r+1}[t_1,\dots,t_{j-1},t_{r+1},t_{j},\dots,t_r] \\
	&\quad+\sum_{j=1}^r[t_1,\dots,t_{j-1},t_{r+1}+t_j,t_{j+1},\dots,t_r].
\end{align*}
\end{example}

\begin{definition}
For $r\geq1$, we define the generating functions $Z_*(t_1,\dots,t_r)\in\C[[t_1,\dots,t_r]]$ of the family $\{ \mathfrak z(z_{k_1}\cdots z_{k_r})\in\C\ |\ k_1,\dots,k_r\in\Z_{\leq0} \}$ by
\begin{align}\label{eqn:gen fun of ren val of har type}
Z_*(t_1,\dots,t_r)
:=\sum_{k_1,\dots,k_r\geq0}\frac{(-t_1)^{k_1}\cdots(-t_r)^{k_r}}{k_1!\cdots k_r!}\mathfrak z(z_{-k_1}\cdots z_{-k_r}).
\end{align}
\end{definition}

We put $g:\mathcal A_T\rightarrow \cup_{r\geq1}\C[[t_1,\dots,t_r]]$ to be the $\C$-linear map defined by $g(\emptyset):=1$ and
\begin{equation*}
	g\bigl([u_1,\dots,u_r]\bigr):=Z_*(u_1,\dots,u_r),
\end{equation*}
for $r\geq1$ and $u_1,\dots,u_r\in T_\Z$.
Then the following lemma holds.
\begin{lem}
The map $g$ is an algebra homomorphism, that is, we have
\begin{equation}\label{eqn:alg. hom. of fQ}
	g\bigl( \omega*\eta \bigr)
	=g\bigl( \omega \bigr)g\bigl( \eta \bigr)
\end{equation}
for any $\omega,\eta\in T_\Z^\bullet$.
\end{lem}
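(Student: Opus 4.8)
The plan is to factor $g$ through the harmonic algebra itself, reducing the statement to the fact that a certain exponential generating series map is a quasi-shuffle homomorphism. For $u\in T_\Z$ set $e(u):=\sum_{k\geq 0}\frac{(-u)^k}{k!}z_{-k}$; since each $u$ is a $\Z$-linear combination of finitely many $t_i$, the coefficient of $z_{-k}$ in $e(u)$ is a homogeneous polynomial of degree $k$ in those variables, so $e(u)$ is a well-defined element of a degree-completed polynomial ring $\mathcal H_{\leq0}[[t_1,\dots,t_N]]$ (for $N$ large enough to accommodate the variables occurring). Define $\Phi\colon\mathcal A_T\to\mathcal H_{\leq0}[[t_1,\dots,t_N]]$ by $\Phi(\emptyset):=1$ and $\Phi([u_1,\dots,u_r]):=e(u_1)\cdots e(u_r)$, the ordinary concatenation product. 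Unwinding \eqref{eqn:gen fun of ren val of har type} (substituting $t_i\mapsto u_i$ and pulling $\mathfrak z$ out coefficient-wise) gives $g=\widehat{\mathfrak z}\circ\Phi$, where $\widehat{\mathfrak z}$ is the extension of $\mathfrak z$ to $\mathcal H_{\leq0}[[t_1,\dots,t_N]]$ acting on each coefficient. As $\mathfrak z$ is an algebra homomorphism for $*$ with values in $\C$ under its ordinary product, so is $\widehat{\mathfrak z}$; hence it suffices to prove that $\Phi$ is an algebra homomorphism from $(\mathcal A_T,*)$ to $(\mathcal H_{\leq0}[[t_1,\dots,t_N]],*)$, i.e. $\Phi(\omega*\eta)=\Phi(\omega)*\Phi(\eta)$ for all $\omega,\eta\in T_\Z^\bullet$.

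The main computational input is a generating-function form of the harmonic recursion: for $W_1,W_2\in\mathcal H_{\leq0}[[t_1,\dots,t_N]]$ and $u_1,u_2\in T_\Z$,
\[
e(u_1)W_1*e(u_2)W_2=e(u_1)\bigl(W_1*e(u_2)W_2\bigr)+e(u_2)\bigl(e(u_1)W_1*W_2\bigr)+e(u_1+u_2)\bigl(W_1*W_2\bigr).
\]
This is obtained by applying the defining relation \eqref{eqn:def of harmonic} of $*$ to $z_{-k}W_1*z_{-l}W_2$ and summing against $\frac{(-u_1)^k}{k!}\frac{(-u_2)^l}{l!}$: the first two families of terms reassemble, using bilinearity of $*$, into the first two terms on the right, while the merged terms $z_{-k-l}(W_1*W_2)$ combine, via the binomial identity $\sum_{k+l=m}\binom{m}{k}(-u_1)^k(-u_2)^l=(-(u_1+u_2))^m$, into $e(u_1+u_2)(W_1*W_2)$. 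All rearrangements are legitimate because the ring is graded by total degree in the $t_i$ and each graded piece is a finite sum.

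With this in hand, one proves $\Phi(\omega*\eta)=\Phi(\omega)*\Phi(\eta)$ by induction on $l(\omega)+l(\eta)$. The base case ($\omega=\emptyset$ or $\eta=\emptyset$) is immediate since $\Phi(\emptyset)=1$. For the inductive step write $\omega=[u_1,w_1]$ and $\eta=[u_2,w_2]$. Expanding $\omega*\eta$ by \eqref{eqn:def of harmonic on AT}, applying $\Phi$, and using $\Phi([u,\chi])=e(u)\Phi(\chi)$ (valid for any $\chi\in\mathcal A_T$ by $\C[[t]]$-linearity of $\Phi$), one gets
\[
\Phi(\omega*\eta)=e(u_1)\Phi(w_1*\eta)+e(u_2)\Phi(\omega*w_2)+e(u_1+u_2)\Phi(w_1*w_2).
\]
By the induction hypothesis $\Phi(w_1*\eta)=\Phi(w_1)*\Phi(\eta)$, $\Phi(\omega*w_2)=\Phi(\omega)*\Phi(w_2)$, and $\Phi(w_1*w_2)=\Phi(w_1)*\Phi(w_2)$; substituting $\Phi(\omega)=e(u_1)\Phi(w_1)$ and $\Phi(\eta)=e(u_2)\Phi(w_2)$ and comparing with the generating-function recursion of the previous paragraph (with $W_1=\Phi(w_1)$, $W_2=\Phi(w_2)$) identifies the right-hand side with $\Phi(\omega)*\Phi(\eta)$, completing the induction. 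Applying $\widehat{\mathfrak z}$ then yields \eqref{eqn:alg. hom. of fQ}.

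I expect the only genuine obstacle to be the bookkeeping in the generating-function recursion — keeping track of the infinite sums defining $e(u)$ and justifying the interchange of summations — but since everything lives in the ring graded by total $t$-degree and each graded piece is a finite sum, this is a matter of care rather than of substance; the remainder is a routine induction driven by the two defining recursions \eqref{eqn:def of harmonic} and \eqref{eqn:def of harmonic on AT}.
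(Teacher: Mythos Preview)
Your proof is correct and follows essentially the same approach as the paper: your map $\Phi$ is precisely the paper's $\widetilde{Z_*}$ (with $\Phi([u_1,\dots,u_r])=\widetilde{Z_*}(u_1,\dots,u_r)$), and both arguments reduce to showing that this lift to $\mathcal H_{\leq0}[[t_1,\dots,t_N]]$ is a $*$-homomorphism via the same induction, the same harmonic recursion, and the same binomial identity $\sum_{k+l=m}\binom{m}{k}(-u_1)^k(-u_2)^l=(-(u_1+u_2))^m$ for the merged term. The only differences are notational and organizational---you isolate the recursion for $e(u_1)W_1*e(u_2)W_2$ as a separate step and prove the homomorphism for arbitrary $\omega,\eta\in T_\Z^\bullet$, while the paper works with generic variables $t_1,\dots,t_{r+s}$---but the substance is identical.
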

\begin{proof}
For $r\geq1$, we put
\begin{align*}
	\widetilde{Z_*}(t_1,\dots,t_r)
	:=\sum_{k_1,\dots,k_r\geq0}
	\frac{(-t_1)^{k_1}\cdots (-t_r)^{k_r}}{k_1!\cdots k_r!}z_{-k_1}\cdots z_{-k_r}
	\in\mathcal H_{\leq0} [[t_1,\dots,t_r]].
\end{align*}
Because we have $\mathfrak z\left(\widetilde{Z_*}(t_1,\dots,t_r)\right)=Z_*(t_1,\dots,t_r)$ and $\mathfrak z$ is an algebra homomorphism, we have
\begin{align*}
	\mathfrak z\left(\widetilde{Z_*}(t_1,\dots,t_r)*\widetilde{Z_*}(t_{r+1},\dots,t_{r+s})\right)
	&=Z_*(t_1,\dots,t_r)Z_*(t_{r+1},\dots,t_{r+s}) \\
	&=g\bigl( [t_1,\cdots,t_r] \bigr)g\bigl( [t_{r+1},\cdots,t_{r+s}] \bigr),
\end{align*}
for $r,s\geq1$.
Hence, it is sufficient to prove 
\begin{align}\label{eqn: alg. hom. of fQ}
	\widetilde{Z_*}(t_1,\dots,t_r)*\widetilde{Z_*}(t_{r+1},\dots,t_{r+s})
	&= \sum_{\alpha\in T_\Z^\bullet}\QSh{[t_1,\cdots,t_r]}{[t_{r+1},\cdots,t_{r+s}]}{\alpha}\widetilde{Z_*}(\alpha),
\end{align}
for $r,s\geq1$.
We have
\begin{align*}
	&\widetilde{Z_*}(t_1,\dots,t_r)*\widetilde{Z_*}(t_{r+1},\dots,t_{r+s}) \\
%1-equal
	=&\sum_{k_1,\dots,k_{r+s}\geq0}
	\frac{(-t_1)^{k_1}\cdots (-t_{r+s})^{k_{r+s}}}{k_1!\cdots k_{r+s}!}
	(z_{-k_1}\cdots z_{-k_r}*z_{-k_{r+1}}\cdots z_{-k_{r+s}}).
\intertext{Here, by definition \eqref{eqn:def of harmonic}, we calculate}
%2-equal
	=&\sum_{k_1,\dots,k_{r+s}\geq0}
	\left\{z_{-k_1}(z_{-k_2}\cdots z_{-k_r}*z_{-k_{r+1}}\cdots z_{-k_{r+s}})
	+z_{-k_{r+1}}(z_{-k_1}\cdots z_{-k_r}*z_{-k_{r+2}}\cdots z_{-k_{r+s}})\right. \\
	&\hspace{3cm}\left.+z_{-k_1-k_{r+1}}(z_{-k_2}\cdots z_{-k_r}*z_{-k_{r+2}}\cdots z_{-k_{r+s}})\right\}
	\frac{(-t_1)^{k_1}\cdots (-t_{r+s})^{k_{r+s}}}{k_1!\cdots k_{r+s}!} \\
%3-equal
	=&	\widetilde{Z_*}(t_1)\left\{ \widetilde{Z_*}(t_2,\cdots,t_r)*\widetilde{Z_*}(t_{r+1},\cdots,t_{r+s})\right\} 
	+\widetilde{Z_*}(t_{r+1})\left\{ \widetilde{Z_*}(t_1,\cdots,t_r)*\widetilde{Z_*}(t_{r+2},\cdots,t_{r+s})\right\} \\
	&\quad + \left( \sum_{k_1,k_{r+1}\geq0}
	\frac{(-t_1)^{k_1}(-t_{r+1})^{k_{r+1}}}{k_1!k_{r+1}!}z_{-k_1-k_{r+1}} \right)
	\left\{\widetilde{Z_*}(t_2,\cdots,t_{r})*\widetilde{Z_*}(t_{r+2},\cdots,t_{r+s})\right\} \\
%4-equal
	=&	\widetilde{Z_*}(t_1)\left\{ \widetilde{Z_*}(t_2,\cdots,t_r)*\widetilde{Z_*}(t_{r+1},\cdots,t_{r+s})\right\} 
	+\widetilde{Z_*}(t_{r+1})\left\{ \widetilde{Z_*}(t_1,\cdots,t_r)*\widetilde{Z_*}(t_{r+2},\cdots,t_{r+s})\right\} \\
	&\quad + \widetilde{Z_*}(t_1 + t_{r+1})
	\left\{\widetilde{Z_*}(t_2,\cdots,t_{r})*\widetilde{Z_*}(t_{r+2},\cdots,t_{r+s})\right\}.
\intertext{By induction hypothesis, we get}
%5-equal
	=&	\widetilde{Z_*}(t_1)\sum_{\alpha\in T_\Z^\bullet}\QSh{[t_2,\cdots,t_r]}{[t_{r+1},\cdots,t_{r+s}]}{\alpha}\widetilde{Z_*}(\alpha) \\
	& + \widetilde{Z_*}(t_{r+1})\sum_{\alpha\in T_\Z^\bullet}\QSh{[t_1,\cdots,t_r]}{[t_{r+2},\cdots,t_{r+s}]}{\alpha}\widetilde{Z_*}(\alpha) \\
	&\quad + \widetilde{Z_*}(t_1 + t_{r+1})
	\sum_{\alpha\in T_\Z^\bullet}\QSh{[t_2,\cdots,t_{r}]}{[t_{r+2},\cdots,t_{r+s}]}{\alpha}
	\widetilde{Z_*}(\alpha).
\intertext{Here, by the definition of $\widetilde{Z_*}$, we see that $\widetilde{Z_*}(t)\widetilde{Z_*}(\alpha)=\widetilde{Z_*}(t,\alpha)$ holds for $t\in T_\Z$ and $\alpha\in T_\Z^\bullet$.
Therefore, we have}
%6-equal
	=&	\sum_{\alpha\in T_\Z^\bullet}\QSh{[t_2,\cdots,t_r]}{[t_{r+1},\cdots,t_{r+s}]}{\alpha}\widetilde{Z_*}([t_1,\alpha]) \\
	& + \sum_{\alpha\in T_\Z^\bullet}\QSh{[t_1,\cdots,t_r]}{[t_{r+2},\cdots,t_{r+s}]}{\alpha}\widetilde{Z_*}([t_{r+1},\alpha]) \\
	&\quad + \sum_{\alpha\in T_\Z^\bullet}\QSh{[t_2,\cdots,t_{r}]}{[t_{r+2},\cdots,t_{r+s}]}{\alpha}
	\widetilde{Z_*}([t_1 + t_{r+1},\alpha]).
\intertext{By using the definition \eqref{eqn:def of harmonic on AT}, we get}
%7-equal
	=&	\sum_{\alpha\in T_\Z^\bullet}
	\QSh{[t_1,\cdots,t_r]}{[t_{r+1},\cdots,t_{r+s}]}{\alpha}\widetilde{Z_*}(\alpha).
\end{align*}
Hence, we obtain \eqref{eqn: alg. hom. of fQ}.
\end{proof}
In order to prove Proposition \ref{prop:r-product of gen fun of GZ}, we prepare Lemma \ref{lem:permutation}.
For $r,i\in\N$ with $i\leq r$, we define $\mathcal P(r,i)$ to be the set of all surjective maps from $\{1,\dots,r\}$ to $\{1,\dots,i\}$.
For any element $\sigma\in\mathcal P(r,i)$ and $1\leq k\leq i$, we put
$$
t_{\sigma^{-1}(k)}:=\sum_{n\in\sigma^{-1}(k)}t_n.
$$
We note that $\mathcal P(r,r)$ is equal to the symmetric group of degree $r$, and we note that $\#\mathcal P(r,1)=1$, that is, the only element $\sigma \in\mathcal P(r,1)$ is given by $\sigma(k):=1$ for $1\leq k\leq r$.
\footnote{In example \ref{ex: thm in sec4}, we explicitly compute $\mathcal P(r,i)$ for $r=2,3$.}
\begin{lem}\label{lem:permutation}
Let $r\geq1$. Then, for $1\leq i\leq r+1$, the summation
\begin{align*}
	\sum_{\sigma\in\mathcal P(r+1,i)}[t_{\sigma^{-1}(1)},\dots,t_{\sigma^{-1}(i)}]
\end{align*}
is equal to
\begin{align*}
	&\sum_{j=1}^i
	\left\{ \sum_{\tau\in\mathcal P(r,i-1)}
	[t_{\tau^{-1}(1)},\dots,t_{\tau^{-1}(j-1)},t_{r+1},t_{\tau^{-1}(j)},\dots,t_{\tau^{-1}(i-1)}] \right.  \\
	&\quad+\left. \sum_{\tau\in\mathcal P(r,i)}
	[t_{\tau^{-1}(1)},\dots,t_{\tau^{-1}(j-1)},t_{r+1}+t_{\tau^{-1}(j)},t_{\tau^{-1}(j+1)},\dots,t_{\tau^{-1}(i)}] \right\}.
\end{align*}
Here, for $i=0$ and $r+1$, we put $\mathcal P(r,i)$ to be the empty set.
\end{lem}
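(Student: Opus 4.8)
The plan is to classify the surjections $\sigma\in\mathcal P(r+1,i)$ according to the value $j:=\sigma(r+1)$ together with the behaviour of the restriction $\bar\sigma:=\sigma|_{\{1,\dots,r\}}$, and to match the two resulting families of words with the two double sums on the right-hand side. The whole argument is a disjoint-union count once the effect of adjoining the letter $r+1$ to a surjection of $\{1,\dots,r\}$ is understood at the level of fibres.

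First I would note that, because $\sigma$ is surjective onto $\{1,\dots,i\}$ while $\{1,\dots,r+1\}$ has just one more element than $\{1,\dots,r\}$, the restriction $\bar\sigma$ is either already surjective onto $\{1,\dots,i\}$, or it omits exactly the single value $j=\sigma(r+1)$ (it can omit at most one value, and if it omitted some $j'\ne j$ then $j'$ would fail to be hit by $\sigma$). This exhibits $\mathcal P(r+1,i)$ as a disjoint union $A\sqcup B$, where $A$ consists of the $\sigma$ with $\bar\sigma$ surjective and $B$ of those with $\bar\sigma$ not surjective, and I would handle the two pieces separately.

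For the piece $A$, writing $j:=\sigma(r+1)$ and $\tau:=\bar\sigma\in\mathcal P(r,i)$, the only change to the fibres is that $r+1$ joins the fibre over $j$, so $\sigma^{-1}(k)=\tau^{-1}(k)$ for $k\ne j$ while $t_{\sigma^{-1}(j)}=t_{r+1}+t_{\tau^{-1}(j)}$; hence the word attached to $\sigma$ is precisely the $(j,\tau)$-term of the second double sum, and $\sigma\leftrightarrow(j,\tau)$ is a bijection $A\xrightarrow{\sim}\{1,\dots,i\}\times\mathcal P(r,i)$ (the inverse extends $\tau$ by $r+1\mapsto j$). For the piece $B$, I would compose $\bar\sigma$ with the order-preserving relabelling $\iota_j\colon\{1,\dots,i\}\setminus\{j\}\xrightarrow{\sim}\{1,\dots,i-1\}$ (identity below $j$, subtract $1$ above $j$) to obtain $\tau:=\iota_j\circ\bar\sigma\in\mathcal P(r,i-1)$; then $\sigma^{-1}(j)=\{r+1\}$, $\sigma^{-1}(k)=\tau^{-1}(k)$ for $k<j$ and $\sigma^{-1}(k)=\tau^{-1}(k-1)$ for $k>j$, so the word attached to $\sigma$ is the $(j,\tau)$-term of the first double sum, with $\sigma\leftrightarrow(j,\tau)$ a bijection $B\xrightarrow{\sim}\{1,\dots,i\}\times\mathcal P(r,i-1)$. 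Summing the contributions of $A$ and of $B$ yields the asserted identity, and the conventions $\mathcal P(r,0)=\mathcal P(r,r+1)=\emptyset$ take care of the endpoints: when $i=r+1$ the set $A$ is empty, and when $i=1$ the set $B$ is empty.

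The step demanding the most care is the fibre bookkeeping in the piece $B$: one must verify that relabelling by $\iota_j$ pushes each block $t_{\sigma^{-1}(k)}$ with $k>j$ down by exactly one position, thereby freeing slot $j$ for the lone new letter $t_{r+1}=t_{\sigma^{-1}(j)}$, and, symmetrically, that in $A$ the fibre over $j$ absorbs $t_{r+1}$ additively without disturbing the other slots. Once these two index computations are pinned down, the remainder is the routine verification that $A\sqcup B=\mathcal P(r+1,i)$ and that the two displayed bijections are genuine, which I have sketched above.
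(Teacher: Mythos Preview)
Your proposal is correct and follows essentially the same approach as the paper: classify $\sigma\in\mathcal P(r+1,i)$ by $j:=\sigma(r+1)$ and by whether $\#\sigma^{-1}(j)=1$ (your piece $B$) or $\#\sigma^{-1}(j)\geq2$ (your piece $A$), then match fibres with the appropriate $\tau$. The paper organizes the argument into separate cases $i=1$, $2\leq i\leq r$, $i=r+1$, whereas you treat all $i$ uniformly and let the conventions $\mathcal P(r,0)=\mathcal P(r,r+1)=\emptyset$ absorb the endpoints, but the underlying bijections are identical.
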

\begin{proof}
When $i=1$, we have
\begin{align*}
	\sum_{\sigma\in\mathcal P(r+1,1)}[t_{\sigma^{-1}(1)}]
	=[t_1+\cdots+t_{r+1}]
	=\sum_{\tau\in\mathcal P(r,1)}[t_{r+1}+t_{\tau^{-1}(1)}].
\end{align*}
Hence, we get the claim for $i=1$.
When $2\leq i\leq r$, take an element $\sigma\in\mathcal P(r+1,i)$.
Then there uniquely exists $j\in\{1,\dots, i\}$ such that $\sigma(r+1)=j$.
If $\#\sigma^{-1}(j)=1$, there uniquely exists $\tau\in\mathcal P(r,i-1)$ which satisfies
$$
\sigma^{-1}(k)
=\left\{\begin{array}{ll}
	\tau^{-1}(k) & (1\leq k\leq j-1), \\
	\tau^{-1}(k-1) & (j\leq k\leq i).
\end{array}\right.
$$
On the other hand, if $\#\sigma^{-1}(j)\geq2$, there uniquely exists $\tau\in\mathcal P(r,i)$ which satisfies
$$
\sigma^{-1}(k)
=\left\{\begin{array}{ll}
	\tau^{-1}(k)\cup\{r+1\} & (k=j), \\
	\tau^{-1}(k) & (k\neq j).
\end{array}\right.
$$
Hence, we get the claim for $2\leq i\leq r$.
When $i=r+1$, take an element $\sigma\in\mathcal P(r+1,r+1)=\mathfrak S_{r+1}$.
Then there uniquely exists $j\in\{1,\dots, r+1\}$ such that $\sigma(r+1)=j$, and for any $1\leq k\leq r+1$, we have $\#\sigma^{-1}(k)=1$, that is, we get
$$
\left\{\sigma^{-1}(1),\dots,\sigma^{-1}(j-1) ,\sigma^{-1}(j+1) ,\dots,\sigma^{-1}(r+1)\right\}
=\{1,\dots,r\}.
$$
So there uniquely exists $\tau\in\mathfrak S_r=\mathcal P(r,r)$ such that
$$
\left(\sigma^{-1}(1),\dots,\sigma^{-1}(j-1) ,\sigma^{-1}(j+1) ,\dots,\sigma^{-1}(r+1)\right)
=\left(\tau^{-1}(1),\dots,\tau^{-1}(r)\right).
$$
Therefore, we get the claim for $i=r+1$.
Hence, we finish the proof.
\end{proof}
\begin{prop}[cf. {\cite[Lemma 5.2; $q=1$]{Hof3}}]\label{prop:r-product of gen fun of GZ}
For $r\geq1$, we have
\begin{equation}\label{eqn:r-product of gen fun of GZ}
	Z_*(t_1)\cdots Z_*(t_r)
	=\sum_{i=1}^r\sum_{\sigma\in\mathcal P(r,i)}
	Z_*\left(t_{\sigma^{-1}(1)},\dots,t_{\sigma^{-1}(i)}\right).
\end{equation}
\end{prop}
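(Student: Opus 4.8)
The plan is to lift the identity to the harmonic algebra $\mathcal A_T$ and then push it down through the homomorphism $g$. Concretely, I would first prove the purely combinatorial identity
\[
	[t_1]*[t_2]*\cdots*[t_r]=\sum_{i=1}^r\sum_{\sigma\in\mathcal P(r,i)}[t_{\sigma^{-1}(1)},\dots,t_{\sigma^{-1}(i)}]
\]
in $\mathcal A_T$, and then apply $g$ to both sides. Since $g$ is an algebra homomorphism by the preceding lemma, the left-hand side is sent to $g([t_1])\cdots g([t_r])=Z_*(t_1)\cdots Z_*(t_r)$, while the right-hand side is sent to $\sum_{i=1}^r\sum_{\sigma\in\mathcal P(r,i)}Z_*(t_{\sigma^{-1}(1)},\dots,t_{\sigma^{-1}(i)})$ by definition of $g$; this is exactly \eqref{eqn:r-product of gen fun of GZ}.

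I would prove the $\mathcal A_T$-identity by induction on $r$. For $r=1$ it is trivial, since $\mathcal P(1,1)$ consists only of the identity map and both sides equal $[t_1]$. For the inductive step, I would use that $(\mathcal A_T,*)$ is commutative and associative to write $[t_1]*\cdots*[t_{r+1}]=[t_{r+1}]*\bigl([t_1]*\cdots*[t_r]\bigr)$ and substitute the induction hypothesis. The identity in Example \ref{ex:harmonic 1*r}, while stated for the letters $t_1,\dots,t_r$, holds verbatim with those letters replaced by arbitrary elements of $T_\Z$ (it is immediate from \eqref{eqn:def of harmonic on AT}); applying it with the letters $t_{\sigma^{-1}(1)},\dots,t_{\sigma^{-1}(i)}$ gives, for each $\sigma\in\mathcal P(r,i)$,
\[
	[t_{r+1}]*[t_{\sigma^{-1}(1)},\dots,t_{\sigma^{-1}(i)}]=\sum_{j=1}^{i+1}[t_{\sigma^{-1}(1)},\dots,t_{\sigma^{-1}(j-1)},t_{r+1},t_{\sigma^{-1}(j)},\dots,t_{\sigma^{-1}(i)}]+\sum_{j=1}^{i}[t_{\sigma^{-1}(1)},\dots,t_{r+1}+t_{\sigma^{-1}(j)},\dots,t_{\sigma^{-1}(i)}].
\]
Summing this over $1\le i\le r$ and over $\sigma\in\mathcal P(r,i)$ produces the expansion of $[t_{r+1}]*\bigl([t_1]*\cdots*[t_r]\bigr)$.

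It then remains to recognize the resulting double sum as $\sum_{i=1}^{r+1}\sum_{\sigma\in\mathcal P(r+1,i)}[t_{\sigma^{-1}(1)},\dots,t_{\sigma^{-1}(i)}]$, which is precisely the content of Lemma \ref{lem:permutation} summed over $1\le i\le r+1$: the terms containing a freshly inserted letter $t_{r+1}$ correspond, after the substitution $i\mapsto i-1$ (using $\mathcal P(r,0)=\emptyset$), to the first family in Lemma \ref{lem:permutation}, and the terms in which $t_{r+1}$ is merged into an existing letter correspond (using $\mathcal P(r,r+1)=\emptyset$) to the second family. This closes the induction, and applying $g$ finishes the proof. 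The only real work is this bookkeeping of the nested sums — matching the ranges $1\le j\le i+1$ versus $1\le j\le i$, and the two types of entries, against the two sums of Lemma \ref{lem:permutation} — but since that lemma was built for exactly this purpose, no genuine obstacle arises.
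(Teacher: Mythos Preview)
Your proposal is correct and follows essentially the same route as the paper: induction on $r$, expanding $[t_{r+1}]*[t_{\sigma^{-1}(1)},\dots,t_{\sigma^{-1}(i)}]$ via Example~\ref{ex:harmonic 1*r}, and then invoking Lemma~\ref{lem:permutation} to reassemble the sums over $\mathcal P(r+1,i)$. The only organizational difference is that you prove the identity $[t_1]*\cdots*[t_r]=\sum_{i,\sigma}[t_{\sigma^{-1}(1)},\dots,t_{\sigma^{-1}(i)}]$ entirely in $\mathcal A_T$ and apply $g$ once at the end, whereas the paper carries $Z_*$ through the induction and passes through $g$ at each step; your packaging is arguably cleaner, but the mathematical content is identical.
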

\begin{proof}
We prove this claim by induction on $r$. When $r=1$, the element $\sigma\in\mathcal P(1,1)$ is only the identity map, i.e., $\sigma^{-1}(1)=\{1\}$. Hence, the right hand side of \eqref{eqn:r-product of gen fun of GZ} is equal to $Z_*(t_1)$. Assume that the equation \eqref{eqn:r-product of gen fun of GZ} holds for $r=r_0\geq1$. When $r=r_0+1$, by multiplying $Z_*(t_{r_0+1})$ to the both sides of \eqref{eqn:r-product of gen fun of GZ} for $r=r_0$, we have
\begin{align*}
	&Z_*(t_{r_0+1})\sum_{i=1}^{r_0}\sum_{\sigma\in\mathcal P(r_0,i)}
	Z_*\left(t_{\sigma^{-1}(1)},\dots,t_{\sigma^{-1}(i)}\right) \\
%1-equality
	&=\sum_{i=1}^{r_0}\sum_{\sigma\in\mathcal P(r_0,i)}
	g\left([t_{r_0+1}]*[t_{\sigma^{-1}(1)},\dots,t_{\sigma^{-1}(i)}]\right).
\intertext{By Example \ref{ex:harmonic 1*r}, we calculate}
%2-equality
	&=\sum_{i=1}^{r_0}\sum_{\sigma\in\mathcal P(r_0,i)}
	g\left(\sum_{j=1}^{i+1}[t_{\sigma^{-1}(1)},\dots,t_{\sigma^{-1}(j-1)},t_{r_0+1},t_{\sigma^{-1}(j)},\dots,t_{\sigma^{-1}(i)}] \right. \\
	&\quad \left.+\sum_{j=1}^i[t_{\sigma^{-1}(1)},\dots,t_{\sigma^{-1}(j-1)},t_{r_0+1}+t_{\sigma^{-1}(j)},t_{\sigma^{-1}(j+1)},\dots,t_{\sigma^{-1}(i)}] \right).
\intertext{By decomposing each summations, we have}
%3-equality
	&=\sum_{\sigma\in\mathcal P(r_0,r_0)}
	g\left(\sum_{j=1}^{r_0+1}[t_{\sigma^{-1}(1)},\dots,t_{\sigma^{-1}(j-1)},t_{r_0+1},t_{\sigma^{-1}(j)},\dots,t_{\sigma^{-1}(r_0)}] \right) \\
	&\quad+\sum_{i=1}^{r_0-1}\sum_{\sigma\in\mathcal P(r_0,i)}
	g\left(\sum_{j=1}^{i+1}[t_{\sigma^{-1}(1)},\dots,t_{\sigma^{-1}(j-1)},t_{r_0+1},t_{\sigma^{-1}(j)},\dots,t_{\sigma^{-1}(i)}] \right) \\
	&\quad +\sum_{i=2}^{r_0}\sum_{\sigma\in\mathcal P(r_0,i)}g\left(\sum_{j=1}^i[t_{\sigma^{-1}(1)},\dots,t_{\sigma^{-1}(j-1)},t_{r_0+1}+t_{\sigma^{-1}(j)},t_{\sigma^{-1}(j+1)},\dots,t_{\sigma^{-1}(i)}] \right) \\
	&\quad +\sum_{\sigma\in\mathcal P(r_0,1)}g\left([t_{r_0+1}+t_{\sigma^{-1}(1)}] \right).
\end{align*}
By applying Lemma \ref{lem:permutation} for $r=r_0$ and $i=r_0+1$ (resp. $i=1$) to the first term (resp. the fourth term), we get
\begin{align*}
	&=
	g\left(\sum_{\sigma\in\mathcal P(r_0+1,r_0+1)}[t_{\sigma^{-1}(1)},\dots,t_{\sigma^{-1}(r_0+1)}] \right) \\
	&\quad+\sum_{i=2}^{r_0}g\left(\sum_{j=1}^{i}\left\{\sum_{\sigma\in\mathcal P(r_0,i-1)}
	[t_{\sigma^{-1}(1)},\dots,t_{\sigma^{-1}(j-1)},t_{r_0+1},t_{\sigma^{-1}(j)},\dots,t_{\sigma^{-1}(i-1)}] \right.\right. \\
	&\quad +\left.\left.\sum_{\sigma\in\mathcal P(r_0,i)}[t_{\sigma^{-1}(1)},\dots,t_{\sigma^{-1}(j-1)},t_{r_0+1}+t_{\sigma^{-1}(j)},t_{\sigma^{-1}(j+1)},\dots,t_{\sigma^{-1}(i)}] \right\}\right) \\
	&\quad +g\left(\sum_{\sigma\in\mathcal P(r+1,1)}[t_{\sigma^{-1}(1)}] \right).
\end{align*}
By applying Lemma \ref{lem:permutation} for $r=r_0$ and $2\leq i\leq r_0$ to the second term, we get
\begin{align*}
	Z_*(t_{r_0+1})\sum_{i=1}^{r_0}\sum_{\sigma\in\mathcal P(r_0,i)}
	Z_*\left(t_{\sigma^{-1}(1)},\dots,t_{\sigma^{-1}(i)}\right)
	&=\sum_{i=1}^{r_0+1}g\left(\sum_{\sigma\in\mathcal P(r_0+1,i)}[t_{\sigma^{-1}(1)},\dots,t_{\sigma^{-1}(i)}]\right) \\
	&=\sum_{i=1}^{r_0+1}\sum_{\sigma\in\mathcal P(r_0+1,i)}
	Z_*\left(t_{\sigma^{-1}(1)},\dots,t_{\sigma^{-1}(i)}\right).
\end{align*}
Hence, we obtain the claim.
\end{proof}
By using the above proposition, we get a universal presentation of $Z_{\EMS}(t_1,\dots,t_r)$ (defined by \eqref{eqn:gen fun of zetaEMS}) by any generating functions of renormalized values of harmonic type.
\begin{thm}\label{thm:EMS=GZ,MP}
Let $\mathfrak z$ be a renormalized values of harmonic type (cf. Definition \ref{def:ren val of har type}), and let $Z_*$ be the generating function of $\mathfrak z$ given by \eqref{eqn:gen fun of ren val of har type}. Then for $r\geq1$, we have
\begin{equation}\label{eqn:EMS=GZ,MP}
Z_{\EMS}(t_1,\dots,t_r)
	=\sum_{i=1}^r\sum_{\sigma\in\mathcal P(r,i)}
	Z_*\left(u_{\sigma^{-1}(1)},\dots,u_{\sigma^{-1}(i)}\right).
\end{equation}
Here, $u_{\sigma^{-1}(k)}$ is defined by
$$
u_{\sigma^{-1}(k)}:=\sum_{n\in\sigma^{-1}(k)}u_n,
$$
for $u_i:=t_i+\cdots+t_r$ ($1\leq i\leq r$).
\end{thm}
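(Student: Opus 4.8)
The plan is to combine the two explicit ingredients we already have: the multiplicative recursion
\eqref{eqn:recurrence formula of gen fun} for $Z_{\EMS}$, namely
$Z_{\EMS}(t_1,\dots,t_r)=Z_{\EMS}(t_r)\,Z_{\EMS}(t_{r-1}+t_r)\cdots Z_{\EMS}(t_1+\cdots+t_r)$,
and Proposition \ref{prop:r-product of gen fun of GZ}, which expands an $r$-fold ordinary product of $Z_*$ evaluated at independent variables into the quasi-shuffle sum over $\mathcal P(r,i)$. The key observation is that, in the notation of the theorem, $u_i:=t_i+\cdots+t_r$, so \eqref{eqn:recurrence formula of gen fun} reads
$Z_{\EMS}(t_1,\dots,t_r)=Z_{\EMS}(u_r)Z_{\EMS}(u_{r-1})\cdots Z_{\EMS}(u_1)$;
that is, $Z_{\EMS}(t_1,\dots,t_r)$ is simply a product of one-variable factors indexed by the variables $u_1,\dots,u_r$.

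First I would establish the one-variable identity $Z_{\EMS}(s)=Z_*(s)$ for a single variable $s$. This is the base case $r=1$ of the theorem and should follow directly from Remark \ref{cond: harmonic and Riemann zeta} together with the $r=1$ case of the EMS construction: both sides are the generating series $\sum_{k\geq0}\frac{(-s)^k}{k!}\zeta(-k)$ of the ordinary (Riemann) zeta values at non-positive integers, since $z_{-k}$ is non-singular and $\zeta_{\EMS}(-k)=\zeta(-k)$ as well (the depth-one renormalized value coincides with the analytic value). Concretely one can check that $Z_{\EMS}(s)=\frac{s-(e^s-1)}{s(e^s-1)}$ is exactly $-\frac{1}{2}+\sum_{k\geq0}\frac{(-s)^k}{k!}\zeta(-k)$ reorganized, or simply invoke that $\phi_+$ on a single letter returns the Taylor expansion at $0$ of the analytically continued $\zeta$.

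Second, with the one-variable identity in hand, I would substitute $Z_{\EMS}(u_i)=Z_*(u_i)$ into the product form of \eqref{eqn:recurrence formula of gen fun}, obtaining
$Z_{\EMS}(t_1,\dots,t_r)=Z_*(u_1)Z_*(u_2)\cdots Z_*(u_r)$.
Now apply Proposition \ref{prop:r-product of gen fun of GZ} with the $r$ formal variables taken to be $u_1,\dots,u_r$ in place of $t_1,\dots,t_r$ — this is legitimate because the proposition is an identity of formal power series in $r$ independent indeterminates, and $u_1,\dots,u_r$ are themselves an invertible linear change of coordinates from $t_1,\dots,t_r$, so the substitution is valid in $\C[[t_1,\dots,t_r]]$. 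This yields exactly
$Z_*(u_1)\cdots Z_*(u_r)=\sum_{i=1}^r\sum_{\sigma\in\mathcal P(r,i)}Z_*(u_{\sigma^{-1}(1)},\dots,u_{\sigma^{-1}(i)})$,
with $u_{\sigma^{-1}(k)}=\sum_{n\in\sigma^{-1}(k)}u_n$, which is precisely \eqref{eqn:EMS=GZ,MP}.

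I do not expect a serious obstacle; the proof is essentially a two-line deduction once the pieces are assembled. The only point demanding a little care is the base case $r=1$: one must be sure that the depth-one EMS renormalized value really equals the Riemann zeta value at non-positive integers (so that it matches $\mathfrak z(z_{-k})=\zeta(-k)$), and that the generating-function normalizations in \eqref{eqn:gen fun of zetaEMS} and \eqref{eqn:gen fun of ren val of har type} agree — they do, both being $\sum \frac{(-t)^k}{k!}(\cdots)$. A secondary bookkeeping point is the justification of substituting $u_i$ for $t_i$ in Proposition \ref{prop:r-product of gen fun of GZ}; since that proposition is proved as a formal identity valid for arbitrary formal variables, and the map $g$ and the series $\widetilde{Z_*}$ are defined for arbitrary elements of $T_\Z$ (which includes each $u_i=t_i+\cdots+t_r$), no convergence or well-definedness issue arises.
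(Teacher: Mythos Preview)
Your proposal is correct and follows essentially the same route as the paper: invoke Remark~\ref{cond: harmonic and Riemann zeta} to get $Z_{\EMS}(t_1)=Z_*(t_1)$, plug this into the product formula \eqref{eqn:recurrence formula of gen fun} to write $Z_{\EMS}(t_1,\dots,t_r)=\prod_{i=1}^r Z_*(u_i)$, and then apply Proposition~\ref{prop:r-product of gen fun of GZ} with the variables $u_i$ in place of $t_i$. Your additional remarks on justifying the substitution and on the depth-one identity are sound but are not needed beyond what the paper already provides.
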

\begin{proof}
By Remark \ref{cond: harmonic and Riemann zeta}, we have
$$
Z_{\EMS}(t_1)=Z_*(t_1).
$$
Therefore, by the equation \eqref{eqn:recurrence formula of gen fun}, we have
\begin{align*}
	Z_{\EMS}(t_1,\dots,t_r)
	=\prod_{i=1}^rZ_{\EMS}(t_i+\cdots+t_r)=\prod_{i=1}^rZ_*(t_i+\cdots+t_r).
\end{align*}
Therefore, by putting $u_i:=t_i+\cdots+t_r$ ($1\leq i\leq r$) and by using the equation \eqref{eqn:r-product of gen fun of GZ}, we obtain
\begin{equation*}
	Z_{\EMS}(t_1,\dots,t_r)
	=\sum_{i=1}^r\sum_{\sigma\in\mathcal P(r,i)}
	Z_*\left(u_{\sigma^{-1}(1)},\dots,u_{\sigma^{-1}(i)}\right).
\end{equation*}
Hence, we finish the proof.
\end{proof}

In the following example, we denote $\sigma\in\mathcal P(r,i)$ by
$$
\left(\begin{array}{ccc}
	1 & \cdots & r \\
	\sigma(1) & \cdots & \sigma(r)
\end{array}\right).
$$

\begin{example}\label{ex: thm in sec4}
When $r=2$, we have
$$
\mathcal P(2,2)
=\left\{
\left(\begin{array}{cc}
	1 & 2 \\
	1 & 2
\end{array}\right),
\left(\begin{array}{cc}
	1 & 2 \\
	2 & 1
\end{array}\right)
\right\},\quad
\mathcal P(2,1)
=\left\{
\left(\begin{array}{cc}
	1 & 2 \\
	1 & 1
\end{array}\right)
\right\},
$$
so we get
\begin{equation*}
	Z_{\EMS}(t_1,t_2)
	=Z_*(t_1+t_2,t_2)+Z_*(t_2,t_1+t_2)+Z_*(t_1+2t_2).
\end{equation*}
When $r=3$, we have $\mathcal P(3,1)=\scalebox{.6}{$\left\{
\left(\begin{array}{ccc}
	1 & 2 & 3 \\
	1 & 1 & 1
\end{array}\right)
\right\}$},$
and $\mathcal P(3,2)$ is given by
$$\scalebox{.8}{
$\left\{
\left(\begin{array}{ccc}
	1 & 2 & 3 \\
	1 & 1 & 2
\end{array}\right),
\left(\begin{array}{ccc}
	1 & 2 & 3 \\
	1 & 2 & 1
\end{array}\right),
\left(\begin{array}{ccc}
	1 & 2 & 3 \\
	2 & 1 & 1
\end{array}\right),
\left(\begin{array}{ccc}
	1 & 2 & 3 \\
	2 & 2 & 1
\end{array}\right),
\left(\begin{array}{ccc}
	1 & 2 & 3 \\
	2 & 1 & 2
\end{array}\right),
\left(\begin{array}{ccc}
	1 & 2 & 3 \\
	1 & 2 & 2
\end{array}\right)
\right\}$
},$$
and $\mathcal P(3,3)$ is given by
$$\scalebox{.8}{
$\left\{
\left(\begin{array}{ccc}
	1 & 2 & 3 \\
	1 & 2 & 3
\end{array}\right),
\left(\begin{array}{ccc}
	1 & 2 & 3 \\
	1 & 3 & 2
\end{array}\right),
\left(\begin{array}{ccc}
	1 & 2 & 3 \\
	2 & 1 & 3
\end{array}\right),
\left(\begin{array}{ccc}
	1 & 2 & 3 \\
	2 & 3 & 1
\end{array}\right),
\left(\begin{array}{ccc}
	1 & 2 & 3 \\
	3 & 1 & 2
\end{array}\right),
\left(\begin{array}{ccc}
	1 & 2 & 3 \\
	3 & 2 & 1
\end{array}\right)
\right\}$
}.$$
Hence, we get
\begin{align*}
	Z_{\EMS}(t_1,t_2,t_3)
	& =Z_*(t_1+t_2+t_3,t_2+t_3,t_3) + Z_*(t_1+t_2+t_3,t_3,t_2+t_3) \\
	& \quad+ Z_*(t_2+t_3,t_1+t_2+t_3,t_3) + Z_*(t_2+t_3,t_3,t_1+t_2+t_3) \\
	& \quad+ Z_*(t_3,t_1+t_2+t_3,t_2+t_3) + Z_*(t_3,t_2+t_3,t_1+t_2+t_3) \\
	& \quad+ Z_*(t_1+t_2+2t_3,t_2+t_3)+Z_*(t_2+t_3,t_1+t_2+2t_3) \\
	& \quad+ Z_*(t_2+2t_3,t_1+t_2+t_3)+Z_*(t_1+t_2+t_3,t_2+2t_3) \\
	& \quad+ Z_*(t_1+2t_2+2t_3,t_3)+Z_*(t_3,t_1+2t_2+2t_3) \\
	& \quad+ Z_*(t_1+2t_2+3t_3).
\end{align*}
\end{example}

\begin{cor}\label{cor:EMS=GZ,MP}
The equation \eqref{eqn:EMS=GZ,MP} holds for $Z_*=Z_{\GZ}$ and $Z_{\MP}$ defined by
\begin{align*}
	&Z_{\GZ}(t_1,\dots,t_r):=\sum_{k_1,\dots,k_r=0}^{\infty}\frac{(-t_1)^{k_1}\cdots(-t_r)^{k_r}}{k_1!\cdots k_r!}\zeta_{\GZ}(-k_1,\dots,-k_r), \\
	&Z_{\MP}(t_1,\dots,t_r):=\sum_{k_1,\dots,k_r=0}^{\infty}\frac{(-t_1)^{k_1}\cdots(-t_r)^{k_r}}{k_1!\cdots k_r!}\zeta_{\MP}(-k_1,\dots,-k_r).
\end{align*}
Hence, the renormalized values $\zeta_{\EMS}(-k_1,\dots,-k_r)$ can be represented by a finite linear combination of either $\zeta_{\GZ}(-k_1,\dots,-k_r)$ or $\zeta_{\GZ}(-k_1,\dots,-k_r)$.
\end{cor}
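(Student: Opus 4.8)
The plan is to obtain the corollary as an immediate consequence of Theorem \ref{thm:EMS=GZ,MP}, once one checks that $\zeta_{\GZ}$ and $\zeta_{\MP}$ fall under the notion of renormalized values of harmonic type introduced in \S\ref{Renormalized values related with harmonic relations}. Theorem \ref{thm:EMS=GZ,MP} establishes the identity \eqref{eqn:EMS=GZ,MP} for the generating function $Z_*$ attached to an \emph{arbitrary} element $\mathfrak z\in X_{\leq0}$, so the whole task reduces to producing elements of $X_{\leq0}$ whose generating functions \eqref{eqn:gen fun of ren val of har type} coincide with $Z_{\GZ}$ and $Z_{\MP}$.

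First I would invoke Remark \ref{remark:difference of def of zetaGZ and zetaMP}, where it is recorded that there exist $\mathfrak z_{\GZ},\mathfrak z_{\MP}\in X_{\leq0}$ with $\mathfrak z_{\GZ}(z_{k_1}\cdots z_{k_r})=\zeta_{\GZ}(k_1,\dots,k_r)$ and $\mathfrak z_{\MP}(z_{k_1}\cdots z_{k_r})=\zeta_{\MP}(k_1,\dots,k_r)$ for $k_1,\dots,k_r\in\Z_{\leq0}$. Concretely this rests on two facts imported from \cite{GZ} and \cite{MP}: that $\zeta_{\GZ}$ and $\zeta_{\MP}$ satisfy the harmonic (stuffle) relations, so that the induced maps on $\mathcal H_{\leq0}$ are algebra homomorphisms for $*$; and that they reproduce the analytic continuation of the MZF at the non-singular arguments lying in $\mathcal H_{\leq0}$, i.e. their restriction to $N\cap\mathcal H_{\leq0}$ equals $\zeta^*$. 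Comparing \eqref{eqn:gen fun of ren val of har type} with the displayed definitions of $Z_{\GZ}$ and $Z_{\MP}$, one sees at once that the generating function of $\mathfrak z_{\GZ}$ is $Z_{\GZ}$ and that of $\mathfrak z_{\MP}$ is $Z_{\MP}$. Applying Theorem \ref{thm:EMS=GZ,MP} to $\mathfrak z=\mathfrak z_{\GZ}$ and to $\mathfrak z=\mathfrak z_{\MP}$ then yields \eqref{eqn:EMS=GZ,MP} in both cases. For the final assertion I would extract the coefficient of $(-t_1)^{k_1}\cdots(-t_r)^{k_r}/(k_1!\cdots k_r!)$ from \eqref{eqn:EMS=GZ,MP}; because each $u_{\sigma^{-1}(j)}$ is a $\Z_{\geq0}$-linear form in $t_1,\dots,t_r$ and the index set $\bigcup_i\mathcal P(r,i)$ is finite, only finitely many values $\zeta_{\GZ}(-l_1,\dots,-l_m)$ (resp. $\zeta_{\MP}(-l_1,\dots,-l_m)$) occur, with explicit multinomial weights, which is the claimed finite linear combination.

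The only non-formal point — hence the part I would regard as the main obstacle — is the verification that $\mathfrak z_{\GZ}$ and $\mathfrak z_{\MP}$ really are elements of $X_{\leq0}$, i.e. algebra homomorphisms for the harmonic product that agree with $\zeta^*$ on every non-singular word of $\mathcal H_{\leq0}$; this is precisely what one quotes from \cite{GZ} and \cite{MP}. I would also note the subtlety flagged in Remark \ref{remark:difference of def of zetaGZ and zetaMP}, namely that $\zeta_{\GZ}$, being defined only at non-positive integers, need not lift to an element of $X_{\C,\zeta^*}$; but since membership in $X_{\leq0}$ only tests behaviour on $\mathcal H_{\leq0}$, this restriction is harmless here. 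Everything beyond this is a mechanical coefficient comparison together with the already-proved theorem.
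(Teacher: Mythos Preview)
Your proposal is correct and follows exactly the paper's own argument: the paper's proof simply recalls from Remark~\ref{remark:difference of def of zetaGZ and zetaMP} that $\mathfrak z_{\GZ},\mathfrak z_{\MP}\in X_{\leq0}$ and concludes by applying Theorem~\ref{thm:EMS=GZ,MP}. Your write-up is more detailed (spelling out the coefficient extraction and the $X_{\leq0}$ versus $X_{\C,\zeta^*}$ subtlety), but the logic is identical.
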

\begin{proof}
We recall elements $\mathfrak z_{\GZ}$ and $\mathfrak z_{\MP}$ of $X_{\leq0}$ in Remark \ref{remark:difference of def of zetaGZ and zetaMP}. These elements satisfy
$$
\mathfrak z_{\GZ}(z_{k_1}\cdots z_{k_r})=\zeta_{\GZ}(k_1,\dots,k_r),
\qquad
\mathfrak z_{\MP}(z_{k_1}\cdots z_{k_r})=\zeta_{\MP}(k_1,\dots,k_r),
$$
for $k_1,\dots,k_r\in\Z_{\leq0}$.
Hence, we get the claim.
\end{proof}

%%%%%%%%%%%%%%%%%%%%%%%%%%%%%%%%%%%%%%%%%%%%%%%%%%%%%%%%%%%%%%%%%%%%%%

\end{document}